\newtheorem{theorem}{Theorem}[section]
\newtheorem{lemma}[theorem]{Lemma}
\newtheorem{remark}[theorem]{Remark}
\newtheorem{definition}[theorem]{Definition}
\newtheorem{corollary}[theorem]{Corollary}
\newtheorem{proposition}[theorem]{Proposition}
\newcommand{\R}{\mathbb R}
\newcommand{\Z}{\mathbb Z}
\newcommand{\Q}{\mathbb Q}
\def\op{\operatorname}
\def\a{\mathfrak{a}}
\def\as#1{\renewcommand\arraystretch{#1}}
\def\b{\mathfrak{b}}
\def\bb{{\mathcal B}}
\def\be{\bigskip}
\def\dg{\op{deg}}
\def\dsc{\op{disc}}
\def\diso{\lower.4ex\hbox{$\downarrow$}\raise.4ex\hbox{\mbox{\scriptsize
$\wr$}}}
\def\exp{\op{exp}}
\def\ff#1{\mathbb{F}_{#1}}
\def\ga{\gamma}
\def\gb#1{\overline{\gamma_{#1}(\t_\p)}}
\def\gen#1{\big\langle\, {#1} \,\big\rangle}
\def\iso{\ \lower.3ex\hbox{\as{.08}$\begin{array}{c}\lra\\\mbox{\tiny $\sim\,$}\end{array}$}\ }
\def\l{\mathfrak{l}}
\def\la{\lambda}
\def\lg{l\raise.6ex\hbox to.2em{\hss.\hss}l}
\def\lra{\longrightarrow}
\def\m{{\mathfrak m}}
\def\md#1{\ \mbox{\rm(mod }{#1})}
\def\nn{\noindent}
\def\orb{\hbox to  .3em{$\backslash$}\backslash}
\def\ord{\op{ord}}
\def\p{\mathfrak{p}}
\def\P{\mathfrak{P}}
\def\pp{{\mathcal P}}
\def\q{\mathfrak{q}}
\def\qb{\overline{\mathbb{Q}}}
\def\qp{\mathbb{Q}_p}
\def\qpb{\overline{\mathbb{Q}}_p}
\def\sii{\,\Longleftrightarrow\,}
\def\t{\theta}
\def\tb{\overline{\theta_\p}}
\def\ty{\mathbf{t}}
\def\tq{\,\,|\,\,}
\def\zpx{\Z_p[x]}
\newcounter{cs}
\newcommand{\casos}{\begin{itemize}}
\newcommand{\fcasos}{\end{itemize}\setcounter{cs}{1}}
\newfont{\tit}{cmr12 scaled \magstep3}
\begin{document}
\title{A new computational approach to ideal theory in number fields}
\author[Gu\`ardia]{Jordi Gu\`ardia}
\address{Departament de Matem\`atica Aplicada IV, Escola Polit\`ecnica Superior
d'Enginyera de Vilanova i la
Geltr\'u, Av. V\'\i ctor Balaguer s/n. E-08800 Vilanova i la Geltr\'u,
Catalonia}
\email{guardia@ma4.upc.edu}

\author[Montes]{\hbox{Jes\'us Montes}}
\address{Departament de Ci\`encies Econ\`omiques i Socials,
Facultat de Ci\`encies Socials,
Universitat Abat Oliba CEU,
Bellesguard 30, E-08022 Barcelona, Catalonia, Spain}
\email{montes3@uao.es}

\author[Nart]{\hbox{Enric Nart}}
\address{Departament de Matem\`{a}tiques,
         Universitat Aut\`{o}noma de Barcelona,
         Edifici C, E-08193 Bellaterra, Barcelona, Catalonia, Spain}
\email{nart@mat.uab.cat}
\thanks{Partially supported by MTM2009-13060-C02-02 and MTM2009-10359 from the
Spanish MEC}
\date{}
\keywords{discriminant, fractional ideal, Montes algorithm, Newton polygon,
number field, factorization}

\makeatletter
\@namedef{subjclassname@2010}{%
  \textup{2010} Mathematics Subject Classification}

\subjclass[2010]{Primary 11Y40; Secondary 11Y05, 11R04, 11R27}

\begin{abstract}
Let $K$ be the number field determined by a monic irreducible polynomial $f(x)$ with integer coefficients. In previous papers we parameterized the prime ideals of $K$ in terms of certain invariants attached to Newton polygons of higher order of the defining equation $f(x)$. In this paper we show how to carry out the basic operations on fractional ideals of $K$ in terms of these constructive representations of the prime ideals. From a computational perspective, these
results facilitate the manipulation of fractional ideals of $K$ avoiding two heavy tasks: the construction of the maximal order of $K$ and the factorization of the discriminant of $f(x)$. The main computational ingredient is Montes algorithm, which is an extremely fast procedure to construct the prime ideals.
\end{abstract}

\maketitle
\section*{Introduction}
Let $K$ be a number field of degree $n$ and $\Z_K$ its ring of integers. An essential task in Algorithmic Number Theory is to construct the prime ideals of $K$ in terms of a defining equation of $K$, usually given by  a monic and irreducible polynomial $f(x)\in\Z[x]$. The standard approach to do this, followed by most of the algebraic manipulators like {\tt Kant}, {\tt Pari}, {\tt Magma} or {\tt Sage}, is based on the previous computation of
an integral basis of $\Z_K$. This approach has a drawback: one needs to factorize the discriminant, $\dsc(f)$, of $f(x)$, which can be a heavy task, even in number fields of low degree, if $f(x)$ has large coefficients.

In this paper we present a direct construction of the prime ideals, that avoids the computation of the maximal order of $K$ and the factorization of $\dsc(f)$. The following tasks concerning fractional ideals can be carried out using this construction:

\begin{enumerate}
\item Compute the $\p$-adic valuation, \ $v_\p\colon K^*\to \Z$, for any prime ideal $\p$ of $K$.
\item Obtain the prime ideal decomposition of a fractional ideal.
\item Compute a two-element representation of a fractional ideal.
\item Add, multiply and intersect fractional ideals.
\item Compute the reduction maps, $\Z_K\to \Z_K/\p$.
\item Solve Chinese remainders problems.
\end{enumerate}

Moreover, along the construction of a prime ideal $\p$, lying over a prime number $p$, a $\Z_p$-basis of the ring of integers of the local field $K_\p$ is obtained as a by-product. Hence, from the prime ideal decomposition of the ideal $p\Z_K$ we are also able to derive the resolution of another task:\medskip
\begin{enumerate}
 \item[(7)] Compute a $p$-integral basis of $K$.
\end{enumerate}
For a given prime number $p$, the prime ideals of $K$ lying above $p$ are in one-to-one correspondence with the irreducible factors of $f(x)$ in $\Z_p[x]$ \cite{hensel}. In \cite{HN} we proved a series of recurrent generalizations of Hensel lemma, leading to a constructive procedure to obtain a family of \emph{$f$-complete types}, that parameterize the irreducible factors of $f(x)$ in $\Z_p[x]$. A type is an object that gathers combinatorial and arithmetic data attached to Newton polygons of $f(x)$ of higher order, and an $f$-complete type contains enough information to single out a $p$-adic irreducible factor of $f(x)$.
In \cite{GMNalgorithm} we described Montes algorithm, which optimizes the construction of the $f$-complete types; it outputs a list of $f$-complete and optimal types that parameterize the prime ideals of $K$ lying above $p$, and contain valuable arithmetic information on each prime ideal. All these results were based on the PhD thesis of the second author \cite{montes}. The algorithm is extremely fast in practice; its complexity has been recently estimated to be $O(n^{3+\epsilon}\delta+n^{2+\epsilon}\delta^{2+\epsilon})$, where $\delta=\log(\dsc(f))$ \cite{FV}.

In \cite{GMNokutsu} we reinterpreted the invariants stored by the types in terms of the Okutsu polynomials attached to the $p$-adic irreducible factors of $f(x)$ \cite{Ok}. Suppose $\ty$ is the $f$-complete and optimal type attached to a prime ideal $\p$, corresponding to a monic irreducible factor $f_\p(x)\in\Z_p[x]$; then, the arithmetic information stored in $\ty$ is synthesized by two invariants of $f_\p(x)$: an \emph{Okutsu frame} $[\phi_1(x),\dots,\phi_r(x)]$ and a \emph{Montes approximation} $\phi_\p(x)$
(cf. loc.cit.). The monic polynomials $\phi_1,\dots,\phi_r,\phi_\p$ have integer coefficients and they are all irreducible over $\Z_p[x]$; the polynomial $\phi_\p(x)$ is ``sufficiently close" to $f_\p(x)$. We say that
$$
\p=[p;\phi_1,\dots,\phi_r,\phi_\p],
$$
is the \emph{Okutsu-Montes representation} of the prime ideal $\p$. Thus, from the computational point of view, $\p$ is structured in $r+1$ levels and at each level one needs to compute (and store) several Okutsu invariants that are omitted in this notation. This computational representation of $\p$ is essentially canonical: the Okutsu inva\-riants of $\p$, distributed along the different le\-vels, depend only on the defining equation $f(x)$. These invariants provide a rich and exhaustive source of information about the arithmetic properties of $\p$, which is crucial in the computational treatment of $\p$.

From a historical perspective, the sake for a constructive representation of ideals goes back to the very foundation of algebraic number theory. Kummer had the
insight that the prime numbers factorize in number fields
into the product of prime ``ideal numbers", and
he tried to construct them as symbols $[p\,;\phi]$, where
$\phi(x)$ is a monic lift to $\Z[x]$ of an irreducible factor of $f(x)$ modulo $p$. Dedekind
showed that these ideas led to a coherent theory only in the case
that $p$ does not divide the index $i(f):=(\Z_K\colon
\Z[x]/(f(x)))$.
This constructive approach could not be universally
used because there are number fields in which $p$ divides the
index of all defining equations \cite{D}. Fortunately, this
obstacle led Dedekind to invent ideal theory as the only way to
perform a decent arithmetic in number fields. Ore, in his Phd
thesis \cite{ore}, tried to regain the constructive approach to
ideal theory. He generalized and improved the classical tool of Newton polygons
and showed that under the assumption that the defining
equation is \emph{$p$-regular} (a much weaker condition than
Dedekind's condition $p\nmid i(f)$), the prime ideals dividing $p$ can
be parameterized as $\p=[p\,;\phi,\phi_\p]$ (in our notation), where
$\phi_\p(x)\in\Z[x]$ is certain polynomial whose $\phi$-Newton polygon
is one-sided and the residual polynomial attached to this side is irreducible (cf. section \ref{secMontes}). The contribution of
\cite{montes} was to extend Ore's ideas in order to obtain a similar construction of the prime ideals in the general case.

The aim of this paper is to show how to use this constructive representation of the prime ideals to carry out the above mentioned tasks (1)-(6) on fractional ideals and to compute $p$-integral bases.
The outline of the paper is as follows. In section \ref{secMontes} we recall the structure of types, we describe their invariants, and we review the process of construction of the Okutsu-Montes representations of the prime ideals. In section \ref{secPadic} we
show how to compute the $\p$-adic valuation of $K$ with respect to a prime ideal $\p$; this is the key ingredient to obtain the factorization of a fractional ideal as a product of prime ideals (with integer exponents). The operations of sum, multiplication and intersection of fractional ideals are trivially based on these tasks. In section \ref{secGenerators} we show how to find integral elements $\alpha_\p\in\Z_K$ such that $\p$ is the ideal of $\Z_K$ generated by $p$ and $\alpha_\p$; this leads to the computation of a two-element representation of any fractional ideal. In section \ref{secCRT}, we show how to compute residue classes modulo prime ideals and we design a chinese remainder theorem routine. Section \ref{secBasis} is devoted to the  construction of a $p$-integral basis.

We have implemented a package in {\tt Magma} that performs all the above mentioned tasks; in section \ref{secKOM} we present several examples showing the excellent performance of the package in cases that the standard packages cannot deal with. Our routines work extremely fast as long as we deal only with fractional ideals whose norm may be factorized. Even in cases where $\dsc(f)$ may be factorized and an integral basis of $\Z_K$ is available, our methods work faster than the standard ones if the degree of $K$ is not too small. Mainly, this is due to the fact that we avoid the use of linear algebra routines (computation of $\Z$-bases of ideals, Hermite and Smith normal forms of $n\times n$ matrices, etc.), that dominate the complexity when the degree $n$ grows. Finally, in section \ref{secConclusion} we make some comments on the apparent limits of these Montes' techniques: they are not yet able to test if a fractional ideal is principal.
We also briefly mention how to extend the results of this paper to the function field case and the similar challenges that arise in this geometric context.\bigskip


\noindent{\bf Notations. }
Throughout the paper we fix a monic irreducible polynomial $f(x)\in\Z[x]$ of degree $n$, and a root $\t\in\qb$ of $f(x)$. We let $K=\Q(\t)$ be the number field generated by $\t$, and $\Z_K$ its ring of integers.

\section{Okutsu-Montes representations of prime ideals}\label{secMontes}
Let $p$ be a prime number. In this section we recall Montes algorithm and we describe the structure of the $f$-complete and optimal types that parameterize the prime ideals of $K$ lying over $p$. The results are mainly extracted from \cite{HN} (HN standing for ``Higher Newton") and \cite{GMNalgorithm}.

Given a field $F$ and  two polynomials $\varphi(y),\,\psi(y)\in F[y]$, we write $\varphi(y)\sim \psi(y)$ to indicate that  there exists a constant $c\in F^*$ such that $\varphi(y)=c\psi(y)$.

\subsection{Types and their invariants}
Let $v\colon \qpb^{\,*}\to \Q$ be the canonical extension of the $p$-adic valuation of $\qp$ to a fixed algebraic closure. We extend $v$ to the discrete valuation $v_1$ on the field $\qp(x)$, determined by:
$$
v_1\colon \Q_p[x]\lra \Z\cup\{\infty\},\quad v_1(b_0+\cdots+b_rx^r):=\min\{v(b_j),\,0\le j\le r\}.
$$

Denote by $\ff0:=\op{GF}(p)$ the prime field of characteristic $p$, and consider the $0$-th \emph{residual polynomial} operator
$$
R_0\colon \zpx\lra \ff0[y],\quad g(x)\mapsto \overline{g(y)/p^{v_1(g)}},
$$
where, $^{\raise.8ex\hbox to 8pt{\hrulefill }}\colon \Z_p[y]\to \ff0[y]$, is the natural reduction map.
A \emph{type of order zero}, $\ty=\psi_0(y)$, is just a monic irreducible polynomial $\psi_0(y)\in\ff0[y]$. A \emph{representative} of $\ty$ is any monic polynomial $\phi_1(x)\in\Z[x]$ such that $R_0(\phi_1)=\psi_0$. The pair $(\phi_1,v_1)$ can be used to attach a Newton polygon to any nonzero polynomial $g(x)\in \qp[x]$. If $g(x)=\sum_{s\ge0}a_s(x)\phi_1(x)^s$ is the $\phi_1$-adic development of $g(x)$, then
$N_1(g):=N_{\phi_1,v_1}(g)$ is the lower convex envelope of the set of points of the plane with coordinates $(s,v_1(a_s(x)\phi_1(x)^s))$ \cite[Sec.1]{HN}.

Let $\lambda_1\in\Q^-$ be a negative rational number, $\lambda_1=-h_1/e_1$, with $h_1,e_1$ po\-sitive coprime integers. The triple $(\phi_1,v_1,\lambda_1)$ determines a discrete valuation $v_2$ on $\qp(x)$, constructed as follows: for any  nonzero polynomial $g(x)\in\zpx$, take a line of slope $\lambda_1$ far below $N_1(g)$ and let it shift upwards till it touches the polygon for the first time; if $H$ is the ordinate at the origin of this line, then $v_2(g(x))=e_1 H$, by definition. Also, the triple $(\phi_1,v_1,\lambda_1)$ determines a residual polynomial  operator
$$
R_1:=R_{\phi_1,v_1,\lambda_1}\colon \zpx\lra \ff1[y],\quad \ff1:=\ff0[y]/(\psi_0(y)),
$$
which is a kind of reduction of first order of $g(x)$ \cite[Def.1.9]{HN}.

Let $\psi_1(y)\in\ff1[y]$ be a monic irreducible polynomial, $\psi_1(y)\ne y$. The triple $\ty=(\phi_1(x);\lambda_1,\psi_1(y))$ is called a \emph{type of order one}. Given any such type, one can compute a representative of $\ty$; that is, a monic polynomial $\phi_2(x)\in\Z[x]$ of degree $e_1\deg\psi_1\deg\phi_1$, satisfying
$R_1(\phi_2)(y)\sim\psi_1(y)$. Now we may start over with the pair $(\phi_2,v_2)$ and repeat all constructions in order two.

The iteration of this procedure leads to the concept of \emph{type of order $r$} \cite[Sec.2]{HN}. A type of order $r\ge 1$ is a chain:
$$
\ty=(\phi_1(x);\lambda_1,\phi_2(x);\cdots;\lambda_{r-1},\phi_r(x);\lambda_r,\psi_r(y)),
$$
where $\phi_1(x),\dots,\phi_r(x)$ are monic polynomials in $\Z[x]$ that are irreducible in $\zpx$, $\lambda_1,\dots,\lambda_r$ are negative rational numbers, and $\psi_r(y)$ is a polynomial over certain finite field $\ff{r}$ (to be specified below), that satisfy the following recursive properties:

\begin{enumerate}
\item $\phi_1(x)$ is irreducible modulo $p$.  We define $\psi_0(y):=R_0(\phi_1)(y)\in \ff0[y]$, $\ff1=\ff0[y]/(\psi_0(y))$.
\item For all $1\le i<r$, $N_i(\phi_{i+1}):=N_{\phi_i,v_i}(\phi_{i+1})$ is one-sided of slope $\lambda_i$, and $R_i(\phi_{i+1})(y):=R_{\phi_i,v_i,\lambda_i}(\phi_{i+1})(y)\sim \psi_i(y)$, for some monic irreducible polynomial $\psi_i(y)\in \ff{i}[y]$. We define $\ff{i+1}=\ff{i}[y]/(\psi_i(y))$.
\item $\psi_r(y)\in\ff{r}[y]$ is a monic irreducible polynomial, $\psi_r(y)\ne y$.
\end{enumerate}

Thus, a type of order $r$ is an object structured in $r$ levels. In the computational representation of a type, several invariants are stored at each level, $1\le i\le r$. The most important ones are:
$$
\begin{array}{ll}
\phi_i(x), & \mbox{monic polynomial in }\Z[x], \mbox{  irreducible over }\zpx\\
m_i, & \deg \phi_i(x),\\
v_i(\phi_i)&\mbox{non-negative integer},\\
\lambda_i=-h_i/e_i,& \mbox{$h_i,e_i$ positive coprime integers},\\
\ell_i,\ell'_i,& \mbox{a pair of integers satisfying }\ell_ih_i-\ell'_ie_i=1,\\
\psi_i(y), & \mbox{monic irreducible polynomial in }\ff{i}[y],\\
f_i& \deg \psi_i(y),\\
z_i& \mbox{the class of $y$ in $\ff{i+1}$, so that }\psi_i(z_i)=0.
\end{array}
$$
Take $f_0:=\deg \psi_0$, and let $z_0\in\ff{1}$ be the class of $y$, so that $\psi_0(z_0)=0$. Note that $m_i=(f_0f_1\cdots f_{i-1})(e_1\cdots e_{i-1})$, $\ff{i+1}=\ff{i}[z_i]$, and $\dim_{\ff0}\ff{i+1}=f_0f_1\cdots f_i$. The discrete valuations $v_1,\dots,v_{r+1}$ on the field $\qp(x)$ are essential invariants of the type.

\begin{definition}\label{defs}
 Let $g(x)\in\zpx$ be a monic separable polynomial, and $\ty$ a type of order $r\ge 1$.

(1) \ We say that $\ty$ \emph{divides} $g(x)$ (and we write $\ty \,|\, g(x)$), if $\psi_r(y)$ divides $R_r(g)(y)$ in $\ff{r}[y]$.

(2) \ We say that $\ty$ is \emph{$g$-complete} if $\operatorname{ord}_{\psi_r}(R_r(g))=1$. In this case, $\ty$ singles out a monic irreducible factor $g_\ty(x)\in\zpx$ of $g(x)$, uniquely determined by the property  $R_r(g_\ty)(y)\sim\psi_r(y)$. If $K_\ty$ is the extension of $\qp$ determined by $g_\ty(x)$, then $$e(K_\ty/\qp)=e_1\cdots e_r,\qquad f(K_\ty/\qp)=f_0f_1\cdots f_r.$$

(3) \ A \emph{representative} of $\ty$ is a monic polynomial $\phi_{r+1}(x)\in\Z[x]$, of  degree $m_{r+1}=e_rf_rm_r$ such that $R_r(\phi_{r+1})(y)\sim \psi_r(y)$. This polynomial is necessarily irreducible in $\zpx$. By the definition of a type, each $\phi_{i+1}(x)$ is a representative of the truncated type of order $i$
$$
\op{Trunc}_{i}(\ty):=(\phi_1(x);\lambda_1,\phi_2(x);\cdots;\lambda_{i-1},\phi_i(x);\lambda_i,\psi_i(y)).
$$

(4) \ We say that $\ty$ is \emph{optimal} if $m_1<\cdots<m_r$, or equivalently, if $e_if_i>1$, for all $1\le i< r$.
\end{definition}

\begin{lemma}\label{vjphii}
Let $\ty$ be a type of order $r$. Then, $v_j(\phi_i)=(m_i/m_j)v_j(\phi_j)$, for all $j<i\le r$.
\end{lemma}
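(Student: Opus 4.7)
The plan is downward induction on $j$ with $i$ fixed, in the range $1 \le j \le i \le r$; the base case $j = i$ is immediate. Assume the claim holds at $(j+1, i)$, and abbreviate $V_k := v_k(\phi_k)$ throughout the argument.

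First I would record the identity $V_{j+1} = (m_{j+1}/m_j)\,v_{j+1}(\phi_j)$, obtained by computing the $y$-intercept of the one-sided polygon $N_j(\phi_{j+1})$, which has slope $\lambda_j = -h_j/e_j$ and right endpoint $(e_jf_j,\,e_jf_jV_j)$: the intercept is $f_j(e_jV_j+h_j) = f_j\,v_{j+1}(\phi_j)$, and $V_{j+1}$ is $e_j$ times that. Consequently the inductive hypothesis rewrites as $v_{j+1}(\phi_i) = (m_i/m_j)\,v_{j+1}(\phi_j)$.

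Next, let $\phi_i = \sum_{s \ge 0} a_s(x)\,\phi_j(x)^s$ be the $\phi_j$-adic expansion, with $a_N = 1$ at $N := m_i/m_j$. The polygon definition of $v_{j+1}$ gives
\[
v_{j+1}(\phi_i) \,=\, \min_s\bigl(e_j\,v_j(a_s) + s\,v_{j+1}(\phi_j)\bigr),
\]
and the $s = N$ term on the right is $N\,v_{j+1}(\phi_j)$, which already matches the inductive hypothesis. Every other term must therefore be bounded below by this value, yielding
\[
v_j(a_s) \,\ge\, (N - s)\bigl(V_j + h_j/e_j\bigr) \qquad \text{for each } s < N.
\]

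To finish, I would apply the non-Archimedean property:
\[
v_j(\phi_i) \,\ge\, \min_s v_j(a_s\phi_j^s) \,=\, \min_s\bigl(v_j(a_s) + sV_j\bigr).
\]
The $s = N$ term equals $NV_j = (m_i/m_j)V_j$, while for $s < N$ the previous bound forces $v_j(a_s) + sV_j \ge NV_j + (N-s)h_j/e_j > NV_j$. Thus the minimum is uniquely attained at $s = N$, and the non-Archimedean inequality becomes equality: $v_j(\phi_i) = (m_i/m_j)V_j$.

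The key obstacle is step two: extracting a concrete lower bound on the $v_j$-valuations of the $\phi_j$-adic coefficients of $\phi_i$ from the inductive hypothesis alone. This works precisely because the inductive equality saturates the minimum in the polygon formula for $v_{j+1}$ at $s = N$, which is exactly the input needed for the non-Archimedean comparison in the final step.
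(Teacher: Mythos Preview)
Your proof is correct, but it takes a genuinely different route from the paper's. The paper's argument is a one-liner: it invokes the structural fact (from \cite[Def.~2.1 and Lem.~2.4]{HN}) that $\phi_i$ is a polynomial of type $\op{Trunc}_j(\ty)$, which forces $N_j(\phi_i)$ to be one-sided of slope $\lambda_j$; combined with the equality $v_j(\phi_i)=\min_s v_j(a_s\phi_j^s)$ from \cite[Lem.~2.17]{HN}, the minimum is read off at the right endpoint $s=m_i/m_j$ immediately. Your argument, by contrast, bypasses both citations: you use downward induction on $j$, invoking only the axiom that $N_j(\phi_{j+1})$ is one-sided (a single step of the type structure) together with the polygon definition of $v_{j+1}$, and you recover the equality $v_j(\phi_i)=\min_s v_j(a_s\phi_j^s)$ from the uniqueness of the minimizer via the ultrametric property. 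In effect, you re-prove inductively that all points of $N_j(\phi_i)$ lie on or above the line of slope $\lambda_j$ through the right endpoint---enough for the lemma, though weaker than full one-sidedness. The paper's approach is shorter and conceptually cleaner once the machinery of \cite{HN} is available; yours is more self-contained and would be preferable in a setting where those lemmas have not yet been established.
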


\begin{proof}
Let $\phi_i(x)=\sum_{s\ge 0} a_s(x)\phi_j(x)^s$ be the $\phi_j$-adic development of $\phi_i$. By \cite[Lem.2.17]{HN}, $v_j(\phi_i)=\min_{s\ge 0}\{v_j(a_s\phi_j^s)\}$. Now, $N_j(\phi_i)$ is one-sided of slope $\lambda_j$, because $\phi_i$ is a polynomial of type $\op{Trunc}_j(\ty)$ \cite[Def.2.1+Lem.2.4]{HN}. Since the principal term of the development is $\phi_j^{m_i/m_j}$, we get
$v_j(\phi_i)=v_j(\phi_j^{m_i/m_j})=(m_i/m_j)v_j(\phi_j)$.
\end{proof}

\subsection{Certain rational functions}\label{subsecratfunctions}
Let $\ty$ be a type of order $r$. We attach to $\ty$ seve\-ral rational functions in $\Q(x)$ \cite[Sec.2.4]{HN}. Note that $v_i(\phi_i)$ is always divisible by $e_{i-1}$
\cite[Thm.2.11]{HN}.

\begin{definition}\label{ratfracs}
Let $\,\pi_0(x)=1$, $\pi_1(x)=p$. We define recursively for all $1\le i\le r$:
$$
\Phi_i(x)=\dfrac{\phi_i(x)}{\pi_{i-1}(x)^{v_i(\phi_i)/e_{i-1}}},\qquad
\ga_i(x)=\dfrac{\Phi_i(x)^{e_i}}{\pi_i(x)^{h_i}},\qquad
\pi_{i+1}(x)=\dfrac{\Phi_i(x)^{\ell_i}}{\pi_i(x)^{\ell'_i}}.
$$
These rational functions can be written as a product of powers of $p,\phi_1(x),\dots,\phi_r(x)$, with integer exponents.
\end{definition}

\noindent{\bf Notation. }Let $\Psi(x)=p^{n_0}\phi_1(x)^{n_1}\cdots\phi_s(x)^{n_s}\in \Q(x)$ be a rational function which is a product of powers of $p,\phi_1,\dots,\phi_s$, with integer exponents. We denote:
$$
\log \Psi=(n_0,\dots,n_s)\in \Z^{s+1}.
$$

The next result is inspired in \cite[Cor.4.26]{HN}.

\begin{lemma}\label{prodgammas}
Let $F(x)\in\Z_p[x]$ be a monic irreducible polynomial divisible by $\ty$, and let $\alpha\in\qpb$ be a root of $F(x)$. For some $1\le s\le r$, let $\Psi(x)=p^{n_0}\phi_1(x)^{n_1}\cdots\phi_s(x)^{n_s}$ be a rational function in $\Q(x)$, such that $v(\Psi(\alpha))=0$. Then,
$$\Psi(x)=\gamma_1(x)^{t_1}\cdots\gamma_s(x)^{t_s},
$$
for certain integer exponents $t_1,\dots,t_s\in\Z$, which can be computed by the following recursive procedure:

{\tt

\qquad vector=$(n_0,\dots,n_s)$

\nopagebreak
\qquad for i=s to 1 by -1 do

\nopagebreak
\qquad\qquad$t_i$=vector[i]/$e_i$

\nopagebreak
\qquad\qquad vector=vector-$t_i\log\gamma_i$

\qquad end for}

\end{lemma}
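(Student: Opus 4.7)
My plan is to exploit two complementary properties of the rational functions $\gamma_i(x)$ attached to the type $\ty$. The first is that $v(\gamma_i(\alpha))=0$ for every $1\le i\le r$, which is essentially the content of \cite[Cor.~4.26]{HN}: the slope $\lambda_i=-h_i/e_i$ is chosen precisely so that $\Phi_i(\alpha)^{e_i}$ and $\pi_i(\alpha)^{h_i}$ have the same $v$-value. I would use this to introduce the linear functional
\[
L\colon\Z^{s+1}\longrightarrow\Q,\qquad L(n_0,\dots,n_s):=n_0+\sum_{j=1}^{s}n_j\,v(\phi_j(\alpha)),
\]
and observe that $L(\log\gamma_i)=0$ for every $i$. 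The starting vector $\log\Psi$ satisfies $L(\log\Psi)=v(\Psi(\alpha))=0$ by hypothesis, so the invariant $L(\vec{v}\,)=0$ is preserved throughout the recursion.

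The second property, which I would read off directly from Definition~\ref{ratfracs}, is a triangular structure: $\pi_i(x)$ involves only $p,\phi_1,\dots,\phi_{i-1}$ and $\Phi_i(x)$ equals $\phi_i(x)$ times a power of $\pi_{i-1}(x)$, so $\log\gamma_i$ vanishes in positions $>i$ and equals $e_i$ in position $i$. This immediately yields invariant (a) of the recursion: at the start of step $i$, the current vector $\vec{v}^{(i)}$ vanishes in positions $i+1,\dots,s$, because each previous step $j>i$ zeroed position $j$ and left positions $>j$ unchanged.

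The main obstacle is invariant (b): that the $i$-th entry $v^{(i)}_i$ is divisible by $e_i$, so that $t_i:=v^{(i)}_i/e_i$ is a genuine integer. To prove this, I would combine $L(\vec{v}^{(i)})=0$, which by invariant (a) reduces to
\[
v^{(i)}_0+\sum_{j=1}^{i}v^{(i)}_j\,v(\phi_j(\alpha))=0,
\]
with the type-theoretic identity $(e_1\cdots e_j)\,v(\phi_j(\alpha))=v_{j+1}(\phi_j)$ and the formula $v_{i+1}(\phi_i)=e_i\,v_i(\phi_i)+h_i$ obtained from the single-vertex Newton polygon $N_i(\phi_i)$. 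Multiplying the displayed equation by $e_1\cdots e_i$ and reducing modulo $e_i$ eliminates every term with $j<i$ (each acquires an extra factor $e_{j+1}\cdots e_i$) and leaves the congruence $v^{(i)}_i\,h_i\equiv 0\pmod{e_i}$; since $\gcd(h_i,e_i)=1$, we conclude $e_i\mid v^{(i)}_i$.

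Once invariants (a) and (b) are established, the conclusion is immediate: when the loop terminates, $\vec{v}^{(0)}$ has zeros in positions $1,\dots,s$, and the preserved invariant forces $v^{(0)}_0=L(\vec{v}^{(0)})=0$. Hence the rational function $\Psi(x)\,\gamma_1(x)^{-t_1}\cdots\gamma_s(x)^{-t_s}$ has trivial exponent vector, so it equals $1$, which is the claimed identity.
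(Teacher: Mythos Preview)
Your proof is correct and follows essentially the same route as the paper's: both use that $\log\gamma_i$ has $i$-th coordinate $e_i$ and zeros above, and both deduce the divisibility $e_i\mid v^{(i)}_i$ from $v(\Psi(\alpha))=0$ by clearing denominators with $e_1\cdots e_i$ and reducing modulo $e_i$ (the paper phrases this via the explicit formula for $v(\phi_s(\alpha))$, you via $(e_1\cdots e_j)v(\phi_j(\alpha))=v_{j+1}(\phi_j)$, which is the same computation). Your presentation with the linear functional $L$ and the named invariants (a), (b) is more explicit, but the mathematical content is identical.
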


\begin{proof}
By \cite[(17)]{HN} and \cite[Cor.3.2]{HN}:
$$\log\Phi_s=(\dots\dots,1),\ \log\pi_s=(\dots\dots,0),\ \log \gamma_s=(\dots\dots,e_s)\in\Z^{s+1},$$
$$
v(\phi_s(\alpha))=\sum_{i=1}^s e_if_i\cdots e_{s-1}f_{s-1}\dfrac{h_i}{e_1\cdots e_i},
\qquad v(\gamma_s(\alpha))=0.
$$
Since $v(\Psi(\alpha))=0$, the formula for $v(\phi_s(\alpha))$ shows that $e_s| n_s$. Thus, we can replace $\Psi(x)$ by $\Psi(x)\gamma_s^{-n_s/e_s}$ and iterate the argument. Since $v(\gamma_s(\alpha))=0$, the new $\Psi(x)$ satisfies $v(\Psi(\alpha))=0$ as well, and the $s$-th coordinate of $\log\Psi$ is zero. At the last step ($s=1$), we get $\Psi(x)=p^{n'_0}\phi_1^{n'_1}$, with $n'_0+n'_1(h_1/e_1)=0$.
Then, clearly $\Psi(x)=\gamma_1(x)^{n'_1/e_1}$.
\end{proof}

\subsection{Montes algorithm and the secondary invariants}
Let $f(x)\in\Z[x]$ be a monic irreducible polynomial. At the input of the pair $(f(x),p)$, Montes algorithm computes a family $\ty_1,\dots,\ty_s$ of $f$-complete and optimal types in one-to-one correspondence with the irreducible factors $f_{\ty_1}(x),\dots,f_{\ty_s}(x)$ of $f(x)$ in $\zpx$. This one-to-one correspondence is determined by:
\begin{enumerate}
 \item For all $1\le i\le s$, the type $\ty_i$ is $f_{\ty_i}$-complete.
\item For all $j\ne i$, the type $\ty_j$ does not divide $f_{\ty_i}(x)$.
\end{enumerate}

The algorithm starts by computing the order zero types determined by the irreducible factors of $f(x)$ modu\-lo $p$, and then proceeds to enlarge them in a convenient way till the whole list of $f$-complete optimal types is obtained \cite{GMNalgorithm}.

With regard to the computation of generators of the prime ideals and chinese remainder multipliers, the algorithm is slightly modified to compute and store some other (secondary) invariants at each level of all types $\ty$ considered by the algorithm:
$$\begin{array}{ll}
\op{Refinements}_i, & \mbox{ a list of pairs $[\phi(x),\lambda]$, where $\phi$ is a representative of}\\& \qquad\op{Trunc}_{i-1}(\ty)\mbox{  and $\lambda$ a negative slope},\\
u_i, & \mbox{ a nonnegative integer called the \emph{height}},\\
\op{Quot}_i,& \mbox{ a list of $e_i$ polynomials in }\Z[x], \\
\log \Phi_i,& \mbox{ a vector }(n_0,\dots,n_i)\in\Z^{i+1},\\
\log \pi_i,& \mbox{ a vector }(n_0,\dots,n_{i-1},0)\in\Z^{i+1},\\
\log \gamma_i,& \mbox{ a vector }(n_0,\dots,n_i)\in\Z^{i+1}.
\end{array}
$$

Let us briefly explain the flow of the algorithm and the computation of these invariants. Suppose a type of order $i-1$ dividing $f(x)$ is considered,
$$\ty=(\phi_1(x);\lambda_1,\phi_2(x);\cdots;\lambda_{i-2},\phi_{i-1};\lambda_{i-1},\psi_{i-1}(y)).$$
A representative $\phi_i(x)$ is constructed. Suppose that either $i=1$ or $m_1<\cdots<m_i$. Let $\ell=\ord_{\psi_{i-1}}R_{i-1}(f)$. If $\ty$ is not $f$-complete ($\ell>1$), it may ramify to produce new types, that will be germs of distinct $f$-complete types. To carry out this ramification process we compute simultaneously the first $\ell+1$ coefficients of the $\phi_i$-adic development of $f(x)$ and the corresponding quotients:
\begin{equation}\label{quotients}
\begin{array}{rcl}
f(x)&=&\phi_i(x)q_1(x)+a_0(x),\\
q_1(x)&=&\phi_i(x)q_2(x)+a_1(x),\\
\cdots&&\cdots\\
q_\ell(x)&=&\phi_i(x)q_{\ell+1}(x)+a_\ell(x).
\end{array}
\end{equation}
The Newton polygon of $i$-th order of $f(x)$, $N_i(f)$, is the lower convex envelope of the set of points  $(s,v_i(a_s\phi_i^s))$ of the plane, for all $s\ge 0$. However, we need to build up only the \emph{principal part} of this polygon, $N^-_i(f)=N^-_{\phi_i,v_i}(f)$, formed by the sides of negative slope of $N_i(f)$. By  \cite[Lem.2.17]{HN}, this latter polygon is the lower convex envelope of the set of points  $(s,v_i(a_s\phi_i^s))$ of the plane, for $0\le s\le \ell$. For each side of slope (say) $\lambda$ of $N^-_i(f)$, the residual polynomial
$R_{\lambda}(f)(y)=R_{\phi_i,v_i,\lambda}(f)(y)\in\ff{i}[y]$ is computed and factorized into a product of irreducible factors. The type $\ty$ branches in principle into as many types as pairs $(\lambda,\psi(y))$, where $\lambda$ runs on the negative slopes of $N^-_i(f)$ and $\psi(y)$ runs on the different irreducible factors of $R_{\lambda}(f)(y)$. If one of these branches
$$
\ty_{\lambda,\psi}:=(\phi_1(x);\lambda_1,\phi_2(x);\cdots;\lambda_{i-1},\phi_i(x);\lambda,\psi(y)),
$$ is $f$-complete, we store this type in an specific list and we go on with the analysis of other branches. Otherwise, we compute a representative $\phi_{\lambda,\psi}(x)$ of $\ty_{\lambda,\psi}$. Let $e$ be the least non-negative denominator of $\lambda$ and $f=\deg \psi$. Then we proceed in a different way according to $ef=1$ or $ef>1$.

If $ef>1$, then $\deg \phi_{\lambda,\psi}>m_i$, so that $\phi_{i+1}:=\phi_{\lambda,\psi}$ may be used to enlarge $\ty_{\lambda,\psi}$ into several optimal types of order $i+1$. We store the invariants
$$
\begin{array}{l}
\phi_i, \ m_i=\deg \phi_i, \ v_i(\phi_i),\ \lambda_i=\lambda, \ h_i,\ e_i=e,\ \ell_i,\ \ell'_i,\ \psi_i=\psi, \ f_i=f,\ z_i, \\ u_i,\ \op{Quot}_i,\ \log\Phi_i,\ \log\pi_i,\ \log\gamma_i
\end{array}
$$
at the $i$-th level of  $\ty_{\lambda,\psi}$, and then we proceed to enlarge the type.
The invariant $v_i(\phi_i)$ is recursively computed by using \cite[Prop.2.7+Thm.2.11]{HN}:
$$
v_i(\phi_i)=\left\{\begin{array}{ll}
0,&\mbox{ if }i=1,\\
e_{i-1}f_{i-1}(e_{i-1}v_{i-1}(\phi_{i-1})+h_{i-1}),&\mbox{ if }i>1.
\end{array}
\right.
$$
Let $s_i$ be the abscissa of the right end point of the side of $N^-_i(f)$ of slope $\lambda$; the secondary invariants are computed as:
\begin{equation}\label{secondaryinvariants}
\as{1.2}
\begin{array}{l}
u_i=v_i(a_{s_i}),\\
\op{Quot}_i=[1,\,q_{s_i-1}(x),\,\dots,\,q_{s_i-e+1}(x)],\\
\log \Phi_i=(n_0,\dots,n_{i-1},1), \mbox{ where }(n_0,\dots,n_{i-1})=-(v_i(\phi_i)/e_{i-1})\log \pi_{i-1},\\
\log \pi_i=\ell_{i-1}\log\Phi_{i-1}-\ell'_{i-1}\log \pi_{i-1},\\
\log \gamma_i=e_i\log\Phi_i-h_i\log \pi_i.
\end{array}
\end{equation}
For $i=1$ we take $\log \Phi_1=(0,1)$ and $\log \pi_1=(1,0)$.
Note that all these secondary invariants depend only on $\lambda$ and not on $\psi$. They are computed only once for each side of $N^-_i(f)$ and then stored in the different optimal branches that share the same slope.
The type $\ty_{\lambda,\psi}$ of order $i$ is then ready for further analysis.

If $ef=1$ then $\deg \phi_{\lambda,\psi}=m_i$, so that the enlargements of $\ty_{\lambda,\psi}$ that would result from building up an $(i+1)$-th level from $\phi_{\lambda,\psi}$ would not be optimal. In this case, we replace $\ty_{\lambda,\psi}$ by  all types
$$
\ty'_{\lambda',\psi'}:=(\phi_1(x);\lambda_1,\phi_2(x);\cdots;\lambda_{i-1},\phi'_i(x);\lambda',\psi'(y)),
$$
obtained by enlarging $\ty$ with $i$-th levels deduced from the consideration of $\phi_{\lambda,\psi}$ as a new (and better) representative of $\ty$: $\phi_i'(x):=\phi_{\lambda,\psi}(x)$, but taking into account only the slopes $\lambda'$ of $N_{\phi'_i,v_i}(f)$ satisfying $\lambda'<\lambda$; this is called a \emph{refinement step}  \cite[Sect.3.2]{GMNalgorithm}. If the total number of pairs $(\lambda,\psi)$ is greater than one, we append to the list $\op{Refinements}_i$ of all types  $\ty'_{\lambda',\psi'}$ the pair $[\phi_i(x),\lambda]$.
Some of these new branches $\ty'_{\lambda',\psi'}$ of order $i$ may be $f$-complete, some may lead to optimal enlargements and some may lead to further refinement at the $i$-th level. Thus, in general, the list $\op{Refinements}_i$ of a type $\ty$ is an ordered sequence of pairs:
$$
\op{Refinements}_i=[[\phi_i^{(1)},\lambda_i^{(1)}], \cdots ,[\phi_i^{(s)},\lambda_i^{(s)}]],
$$
reflecting the fact that along the construction of $\ty$, $s$ or more successive refinement steps occurred at the $i$-th level. All polynomials $\phi_i^{(k)}$ are representatives of $\op{Trunc}_{i-1}(\ty)$, and the slopes $\lambda_i^{(k)}$ grow strictly in absolute size: $|\lambda_i^{(1)}|<\cdots <|\lambda_i^{(s)}|$.  We recall that we store only the pairs  $[\phi_i^{(k)},\lambda_i^{(k)}]$ corresponding to a refinement step that occurred simultaneously with some branching. For instance, if $N_{\phi_i^{(k)},v_i}^-(f)$ has only one side with integer slope $\lambda_i^{(k)}\in\Z$ (i.e. $e=1)$ and the corresponding residual polynomial is the power of an irreducible polynomial of degree one $(f=1)$, then the pair $[\phi_i^{(k)},\lambda_i^{(k)}]$ is not included in the list  $\op{Refinements}_i$.

After a finite number of branching, enlargement and/or refinement steps, all types become $f$-complete and optimal. If $\ty$ is an $f$-complete and optimal type of order $r$, then the \emph{Okutsu depth} of $f_\ty(x)$ is
\cite[Thm.4.2]{GMNokutsu}:
\begin{equation}\label{depth}
 R=\left\{\begin{array}{ll}
 r,&\mbox{ if }e_rf_r>1, \\r-1,&\mbox{ if }e_rf_r=1.
\end{array}\right.
\end{equation}

The invariants $v_{i+1},\,h_i,\,e_i,\,f_i$ at each level $1\le i\le R$ are canonical (depend only on $f(x)$) \cite[Cor.3.7]{GMNokutsu}. On the other hand, the polynomials $\phi_i(x),\psi_i(y)$ depend on several choices, some of them caused by the lifting of elements of a finite field to rings of characteristic zero. However, the sequence $[\phi_1,\dots,\phi_R]$ is an \emph{Okutsu frame} of $f_\ty(x)$ \cite[Sec.2]{GMNokutsu}; in the original terminology of Okutsu, the polynomials $\phi_1,\dots,\phi_R$ are \emph{primitive divisor polynomials} of $f_\ty(x)$ \cite{Ok}.

\subsection{Montes approximations to the irreducible $p$-adic factors}\label{subsecApprox}
Once an $f$-complete and optimal type $\ty$ of order $r$ is computed, Montes algorithm attaches to it an $(r+1)$-level that carries only the invariants:
\begin{equation}\label{lastlevel}\as{1.2}
\begin{array}{l}
\phi_{r+1},\ m_{r+1},\ v_{r+1}(\phi_{r+1}),\ \la_{r+1}=-h_{r+1},\ e_{r+1}=1,\ \psi_{r+1},\ f_{r+1}=1,\\
z_{r+1},\ \log\Phi_{r+1},\ \log\pi_{r+1},\ \log\gamma_{r+1}.
\end{array}
\end{equation}
The polynomial $\phi_{r+1}(x)$ is a representative of $\ty$. The invariants $\lambda_{r+1}$, $\psi_{r+1}$ are deduced from the computation of the principal part of the Newton polygon of $(r+1)$-th order of $f(x)$ (which is a single side of length one) and the corresponding resi\-dual polynomial  $R_{r+1}(f)(y)\in\ff{r+1}[y]$ (which has degree one).

If $\p$ is the prime ideal corresponding to $\ty$, we denote
$$\as{1.3}
\begin{array}{l}
f_\p(x):=f_\ty(x)\in\zpx,\quad \phi_\p(x):=\phi_{r+1}(x)\in\Z[x],\quad \ff{\p}:=\ff{r+1},\\
\ty_\p:=(\phi_1;\lambda_1,\phi_2;\cdots,\phi_r;\lambda_r,\phi_{\p};\lambda_{r+1},\psi_{r+1}),
\end{array}
$$
and we say that $\p=[p;\phi_1,\dots,\phi_r,\phi_\p]$ is the \emph{Okutsu-Montes representation} of $\p$.

Note that $\ty_\p$ is still an $f$-complete type of order $r+1$, but it may eventually be non-optimal because $m_{r+1}=m_r$, if $e_rf_r=1$.

The polynomial $\phi_\p(x)$ is a \emph{Montes approximation} to the $p$-adic irreducible factor $f_\p(x)$ \cite[Sec.4.1]{GMNokutsu}. Several arithmetic tasks involving prime ideals (tasks (1), (3), (5), (6) and (7) from the list given in the Introduction) require the computation of a Montes approximation with a sufficiently large value of the last slope $|\lambda_{r+1}|=h_{r+1}$. This can be achieved by applying a finite number of refinement steps at the $(r+1)$-th level, to the type $\ty_\p$ of order $r+1$, as described in \cite[Sec.4.3]{GMNokutsu}. This procedure has a linear convergence. In \cite{GNP} a more efficient \emph{single-factor lift} algorithm is developed, which is able to improve the Montes approximations to $f_\p(x)$ with quadratic convergence.

\section{$\p$-adic valuation and factorization}\label{secPadic}
In section \ref{subsecPadic} we compute the $\p$-adic valuation, $v_\p\colon K^*\to \Z$, determined by a prime ideal $\p$, in terms of the data contained in the Okutsu-Montes representation of $\p$. In section \ref{subsecFactorization} we describe a procedure to find the prime ideal decomposition of any fractional ideal of $K$. From the computational point of view, this procedure is based on three ingredients:
\begin{enumerate}
\item The factorization of integers.
\item Montes algorithm to find the prime ideal decomposition of a prime number.
\item The computation of $v_\p$ for some prime ideals $\p$.
\end{enumerate}
The routines (2) and (3) run extremely fast in practice (see section \ref{secKOM}).

It is well-known how to add, multiply and intersect fractional ideals once their prime ideal factorization is available. We omit the description of the routines that carry out these tasks.

From now on, to any prime ideal $\p$ of $K$ we attach the data $\ty_\p$, $f_\p(x)$, $\phi_\p(x)$, $\ff{\p}$,  described in section \ref{subsecApprox}. Also, we choose a root
$\t_\p\in\qpb$ of $f_\p(x)$, we consider the local field $K_\p=\qp(\t_\p)$, and we denote by $\Z_{K_\p}$ the ring of integers of $K_\p$.

\subsection{Computation of the $\p$-adic valuation}\label{subsecPadic}
Let $p$ be a prime number and $v\colon \qpb^{\,*}\to \Q$, the canonical $p$-adic valuation. Let $\p$ be a prime ideal of $K$ lying above $p$, corresponding to an $f$-complete type $\ty_\p$ with an added $(r+1)$-th level, as indicated in section \ref{subsecApprox}.
We shall freely use all invariants of $\ty_\p$ described in section \ref{secMontes}. By item 2 of Definition \ref{defs} we know that
$$
e(\p/p)=e_1\cdots e_r,\qquad f(\p/p)=f_0f_1\cdots f_r.
$$

The residue field $\Z_{K_\p}/\p\Z_{K_\p}$ can be identified to the finite field $$\ff{\p}:=\ff{r+1}=\ff{0}[z_0,z_1,\dots,z_r].$$
More precisely, in \cite[(27)]{HN} we construct an explicit isomorphism
\begin{equation}\label{embedding}
\gamma\colon \ff{\p}\iso \Z_{K_\p}/\p\Z_{K_\p}, \qquad z_0\mapsto \tb, \ z_1\mapsto \gb1,\ \dots,\ z_r\mapsto \gb{r},
\end{equation}
where we indicate by a bar the canonical reduction map, $\Z_{K_\p}\lra \Z_{K_\p}/\p\Z_{K_\p}$. We denote by $\op{lred}_\p \colon\Z_{K_\p}\lra \ff{\p}$,  the reduction map obtained by composition of the canonical reduction map with the inverse of the isomorphism (\ref{embedding}).
\begin{equation}\label{lred}
\op{lred}_\p\colon \Z_{K_\p}\lra \Z_{K_\p}/\p\Z_{K_\p} \stackrel{\gamma^{-1}}\lra \ff{\p}.
\end{equation}

Consider the topological embedding $\iota_\p\colon K\hookrightarrow K_\p$ determined by sending $\t$ to $\t_\p$. We have: $v_\p(\alpha)=e(\p/p)v(\iota_\p(\alpha))$, for all $\alpha\in K$. In particular, for any polynomial $g(x)\in\Z[x]$,
\begin{equation}\label{tautology}
v_\p(g(\t))=e(\p/p)v(g(\t_\p)).
\end{equation}

Any $\alpha\in K^*$ can be expressed as $\alpha=(a/b)g(\t)$, for some coprime positive integers $a,b$ and some primitive polynomial $g(x)\in\Z[x]$. By (\ref{tautology}), $$v_\p(\alpha)=e(\p/p)(v(g(\t_\p))+v(a/b)).$$ Thus, it is sufficient to learn to compute $v(g(\t_\p))$. The condition $v(g(\t_\p))=0$ is easy to check \cite[Lem.2.2]{GMNokutsu}:
\begin{equation}\label{v=0}
v(g(\t_\p))=0 \ \sii\ \psi_0\nmid R_0(g).
\end{equation}
If $\psi_0\mid R_0(g)$, the computation of $v(g(\t_\p))$ can be based on the following proposition, which is easily deduced from \cite[Prop.3.5]{HN} and \cite[Cor.3.2]{HN}.

\begin{proposition}\label{vgt}Let $\p,\,f_\p(x),\,\t_\p$ be as above. Let $\ty$ be a type of order $R$ dividing $f_\p(x)$,
and let $g(x)\in\Z[x]$ be a nonzero polynomial. For any $1\le i\le R$, take a line $L_{\lambda_i}$ of slope $\lambda_i$ far below $N_i(g)$, and let it shift upwards till it touches the polygon for the first time. Let $S$ be the intersection of this line with $N_i(g)$, let $(s,u)$ be the coordinates of the left end point of $S$, and let $H=u+s|\lambda_i|$ be the ordinate at the origin of this line. Then,
\begin{enumerate}
\item $v(g(\t_\p))\ge H/e_1\cdots e_{i-1}$, and equality holds if and only if $\op{Trunc}_i(\ty)\nmid g(x)$.
\item If equality holds, then $v(g(\t_\p))=v(\Phi_i(\t_\p)^s\pi_i(\t_\p)^u)$ and
$$
\op{lred}_\p\left(\dfrac{g(\t_\p)}{\Phi_i(\t_\p)^s\pi_i(\t_\p)^u}\right)=R_i(g)(z_i)\ne0.
$$\qed
\end{enumerate}
\end{proposition}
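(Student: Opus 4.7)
The plan is to reduce the statement to the two cited results from \cite{HN} by translating the geometric data $(s,u,H)$ back into the intrinsic invariants $v_{i+1}$, $R_i$, $\Phi_i$, $\pi_i$ attached to the truncated type $\op{Trunc}_i(\ty)$.

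First I would rewrite $H$ in terms of $v_{i+1}$. By the very construction of the discrete valuation $v_{i+1}$ recalled in section \ref{secMontes}, the ordinate at the origin of the line of slope $\lambda_i$ that touches $N_i(g)$ from below equals $v_{i+1}(g)/e_i$. Thus
$$
H/(e_1\cdots e_{i-1})\,=\,v_{i+1}(g)/(e_1\cdots e_i),
$$
so item (1) is simply the assertion
$$
v(g(\t_\p))\ge v_{i+1}(g)/(e_1\cdots e_i),
$$
with equality iff $\op{Trunc}_i(\ty)\nmid g(x)$. Since $\ty$ divides $f_\p(x)$ and $\t_\p$ is a root of $f_\p(x)$, this is the statement of \cite[Prop.3.5]{HN} applied to $\alpha=\t_\p$ and to the type $\op{Trunc}_i(\ty)$. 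That disposes of item (1).

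For item (2), assume the equality in (1), so that $(s,u)$ is the left end point of the segment of $N_i(g)$ cut out by the line of slope $\lambda_i$, with $u=v_i(a_s\phi_i^s)$, where $g(x)=\sum_{j\ge0} a_j(x)\phi_i(x)^j$ is the $\phi_i$-adic development. The contribution of this dominant monomial under $v$ must be computed from the factorisation
$$
\phi_i(x)^s\,=\,\Phi_i(x)^s\,\pi_{i-1}(x)^{sv_i(\phi_i)/e_{i-1}},
$$
while the remaining factor $a_s(\t_\p)$ has valuation $v(a_s(\t_\p))=v_i(a_s)/(e_1\cdots e_{i-1})$ (again by \cite[Prop.3.5]{HN}, since $\op{Trunc}_i(\ty)\nmid a_s$ when $(s,u)$ is a vertex on the boundary cut by $L_{\lambda_i}$). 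Balancing the two factors via the identity $\ell_ih_i-\ell'_ie_i=1$ and the definition of $\pi_i$, one obtains $v(g(\t_\p))=v\bigl(\Phi_i(\t_\p)^s\pi_i(\t_\p)^u\bigr)$. This is essentially bookkeeping in the monoid generated by $p$ and the $\phi_j(\t_\p)$ under $v$, and the coordinates line up precisely because $H=u+s|\lambda_i|$ is by construction the value $v_{i+1}$ assigns to the dominant term.

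Finally, for the $\op{lred}_\p$ statement, the residual polynomial $R_i(g)(y)\in\ff i[y]$ is defined in \cite[Def.1.9, Sec.2]{HN} as the reduction of the coefficients of $g$ lying on the segment $S$, normalised by dividing out $\Phi_i(x)^s\pi_i(x)^u$; its value at $z_i$ represents exactly the class of $g(\t_\p)/(\Phi_i(\t_\p)^s\pi_i(\t_\p)^u)$ under the embedding $\gamma$ of (\ref{embedding}). This is the content of \cite[Cor.3.2]{HN}. Nonvanishing of $R_i(g)(z_i)$ is equivalent to $\psi_i\nmid R_i(g)$, which by Definition \ref{defs}(1) is the same as $\op{Trunc}_i(\ty)\nmid g(x)$, matching the hypothesis of (2).

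The main obstacle I foresee is purely notational: keeping the abscissa $s$, the ordinate $u$, the valuation $v_i(\phi_i)$ and the lift exponents $\ell_i,\ell'_i$ consistent between the Newton-polygon picture and the multiplicative decomposition $\phi_i^s=\Phi_i^s\pi_{i-1}^{\cdots}$, so that the identity $v(g(\t_\p))=v(\Phi_i(\t_\p)^s\pi_i(\t_\p)^u)$ comes out cleanly rather than with a spurious integer factor. Once that is done, items (1) and (2) are exactly the specialisations of \cite[Prop.3.5]{HN} and \cite[Cor.3.2]{HN} to the root $\t_\p$.
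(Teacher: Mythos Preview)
Your proposal is correct and follows exactly the approach indicated in the paper: the proposition is stated there with a \qed and the preceding sentence says it ``is easily deduced from \cite[Prop.3.5]{HN} and \cite[Cor.3.2]{HN}'', which is precisely what you do---identify $H/(e_1\cdots e_{i-1})=v_{i+1}(g)/(e_1\cdots e_i)$ to invoke \cite[Prop.3.5]{HN} for item (1), and read off the $\op{lred}_\p$ identity from \cite[Cor.3.2]{HN} for item (2). Your middle computation for $v(g(\t_\p))=v(\Phi_i(\t_\p)^s\pi_i(\t_\p)^u)$ can in fact be shortened: once you know $v(\Phi_i(\t_\p))=|\lambda_i|/(e_1\cdots e_{i-1})$ and $v(\pi_i(\t_\p))=1/(e_1\cdots e_{i-1})$ (both consequences of \cite[Cor.3.2]{HN}), the equality is immediate from $H=u+s|\lambda_i|$, with no need to unpack $\phi_i^s$ or $a_s$.
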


Figure 1 shows that the segment $S$ may eventually be reduced to a point. In this case, the residual polynomial $R_i(g)(y)$ is a constant \cite[Def.2.21]{HN}, so that $\op{Trunc}_i(\ty)\nmid g(x)$ automatically holds.
\begin{center}
\setlength{\unitlength}{5.mm}
\begin{picture}(16,6)
\put(2.85,1.85){$\bullet$}\put(1.85,2.85){$\bullet$}
\put(-1,0){\line(1,0){7}}\put(0,-1){\line(0,1){6}}
\put(3,2){\line(-1,1){1}}\put(3.02,2){\line(-1,1){1}}
\put(3,2){\line(3,-1){1}}\put(3.02,2){\line(3,-1){1}}
\put(2,3){\line(-1,2){1}}\put(2.02,3){\line(-1,2){1}}
\put(6,.5){\line(-2,1){7}}
\put(5.2,1){\begin{footnotesize}$L_{\lambda_i}$\end{footnotesize}}
\multiput(3,-.1)(0,.25){9}{\vrule height2pt}
\multiput(-.1,2)(.25,0){12}{\hbox to 2pt{\hrulefill }}
\put(1.8,4.5){\begin{footnotesize}$N_i(g)$\end{footnotesize}}
\put(-.6,1.85){\begin{footnotesize}$u$\end{footnotesize}}
\put(-.6,3.1){\begin{footnotesize}$H$\end{footnotesize}}
\put(2.9,2.3){\begin{footnotesize}$S$\end{footnotesize}}
\put(2.8,-.6){\begin{footnotesize}$s$\end{footnotesize}}
\put(14.8,1.35){$\bullet$}\put(12.85,2.35){$\bullet$}
\put(9,0){\line(1,0){8}}\put(10,-1){\line(0,1){6}}
\put(15,1.5){\line(-2,1){2}}\put(15.02,1.5){\line(-2,1){2}}
\put(15,1.5){\line(3,-1){1}}\put(15.02,1.5){\line(3,-1){1}}
\put(13,2.5){\line(-1,2){1.3}}\put(13.02,2.5){\line(-1,2){1.3}}
\put(17,.5){\line(-2,1){8}}
\put(16.6,.8){\begin{footnotesize}$L_{\lambda_i}$\end{footnotesize}}
\put(14,2.1){\begin{footnotesize}$S$\end{footnotesize}}
\put(12.85,-.6){\begin{footnotesize}$s$\end{footnotesize}}
\multiput(13,-.1)(0,.25){11}{\vrule height2pt}
\multiput(9.9,2.55)(.25,0){12}{\hbox to 2pt{\hrulefill }}
\put(12.5,4.5){\begin{footnotesize}$N_i(g)$\end{footnotesize}}
\put(9.5,2.35){\begin{footnotesize}$u$\end{footnotesize}}
\put(9.4,3.6){\begin{footnotesize}$H$\end{footnotesize}}
\end{picture}
\end{center}\be
\begin{center}
Figure 1
\end{center}\be

We may compute $v_\p(g(\t))=e(\p/p)v(g(\t_\p))$ by applying Proposition \ref{vgt} to the type $\ty_\p$. If for some $1\le i\le r+1$, the truncated type $\op{Trunc}_i(\ty_\p)$ does not divide $g(x)$, we compute $v(g(\t_\p))$ as indicated in item 1 of this proposition. Nevertheless, it may occur that $\op{Trunc}_i(\ty_\p)$ divides $g(x)$ for all $1\le i\le r+1$ (for instance, if $g(x)$ is a multiple of $\phi_\p(x)=\phi_{r+1}(x)$). In this case, we compute an improvement of the Montes approximation $\phi_\p(x)$ by applying the single-factor lift routine \cite{GNP}; then, we replace the $(r+1)$-th level of $\ty_\p$ by the invariants (\ref{lastlevel}) determined by the new choice of $\phi_{r+1}(x)=\phi_\p(x)$, and we test again if $\ty_\p=\op{Trunc}_{r+1}(\ty_\p)$ divides $g(x)$.

If $\ty_\p$ divides $g(x)$, then $\phi_\p(x)$ is simultaneously close to a $p$-adic irreducible factor of $f(x)$ and to a $p$-adic irreducible factor of $g(x)$; hence, if $f(x)$ and $g(x)$ do not have a common $p$-adic irreducible factor, after a finite number of steps the renewed type $\ty_\p$ will not divide $g(x)$. On the other hand, if $f(x)$ and $g(x)$ have a common $p$-adic irreducible factor, they must have a common irreducible factor in $\Z[x]$ too; since $f(x)$ is irreducible, necessarily $f(x)$ divides $g(x)$ and $g(\t)=0$.

We may summarize the routine to compute $v_\p(\alpha)$ as follows.     \medskip

\nn{\bf Input: } $\alpha\in K^*$ and a prime ideal $\p$ determined by a type $\ty_\p$ of order $r+1$.

\nopagebreak
\nn{\bf Output: } $v_\p(\alpha)$.\medskip

\nopagebreak
\nn{\bf1. }Write $\alpha=\frac ab g(\t)$, with $a,b$ coprime integers and $g(x)\in\Z[x]$ primitive.

\nopagebreak
\nn{\bf2. }Compute $\nu=v(a/b)$.

\nn{\bf3. }if $\psi_0\nmid R_0(g)$ then return $v_\p(\alpha)=e(\p/p)\nu$.

\nn{\bf4. }for $i=1$ to $r+1$ do

\qquad compute $N_i^-(g)$, $R_i(g)$, and the ordinate $H$ of Proposition \ref{vgt}.

\qquad if $\psi_i\nmid R_i(g)$ then return $v_\p(\alpha)=e(\p/p)((H/e_1\cdots e_{i-1})+\nu)$.

\nn\hphantom{\bf4. }end for.

\nn{\bf5. }while $\psi_{r+1}\mid R_{r+1}(g)$ do

\qquad improve $\phi_\p$ and compute the new values $\la_{r+1}$, $\psi_{r+1}$.

\qquad compute $N_{r+1}^-(g)$, $R_{r+1}(g)$, and the ordinate $H$ of Proposition \ref{vgt}.

\nn\hphantom{\bf5. }end while.

\nn{\bf6. }return $v_\p(\alpha)=e(\p/p)((H/e_1\cdots e_r)+\nu)$.

\subsection{Factorization of fractional ideals}\label{subsecFactorization}
For any $\alpha\in K^*$, the factorization of the principal ideal generated by $\alpha$ is
$$\alpha\Z_K=\prod_\p\p^{v_\p(\alpha)}.$$
Let $\alpha=(a/b)g(\t)$, for some positive coprime integers $a,b$ and some primitive polynomial $g(x)\in\Z[x]$. Then, $v_\p(\alpha)=0$ for all prime ideals $\p$ whose underlying prime number $p$ does not divide the product $ab\op{N}_{K/\Q}(g(\t))=ab\op{Resultant}(f,g)$.

Also, if $\p$ is a prime ideal of $K$ and $\a$, $\b$ are fractional ideals, we have
$$
v_\p(\a+\b)=\min\{v_\p(\a),v_\p(\b)\}.
$$
Thus, the $\p$-adic valuation of the fractional ideal $\a$ generated by $\alpha_1,\dots,\alpha_m\in K^*$ is:
$v_\p(\a)=\min_{1\le i\le m}\{v_\p(\alpha_i)\}$.

After these considerations, it is straightforward to deduce a factorization routine of fractional ideals from the   routines computing prime ideal decompositions of prime numbers and $\p$-valuations of elements of $K^*$ with respect to prime ideals $\p$.\medskip

\nn{\bf Input: } a family $\alpha_1,\dots,\alpha_m\in K^*$ of generators of a fractional ideal $\a$.

\nn{\bf Output: } the prime ideal decomposition $\a=\prod_{\p}\p^{a_\p}$.\medskip

\nn{\bf1. }For each $1\le i\le m$, write $\alpha_i=(a_i/b_i) g_i(\t)$, with $a_i,b_i$ coprime integers and $g_i(x)\in\Z[x]$ primitive; then compute $N_i=\op{N}_{K/\Q}(g_i(\t))$.

\nn{\bf2. }Compute $N=\op{gcd}(a_1N_1,\dots,a_mN_m)$ and $M=\op{lcm}(b_1,\dots,b_m)$.

\nn{\bf3. }Factorize $N$ and $M$ and store all their prime factors in a list $\pp$.

\nn{\bf4. }For each $p\in\pp$ apply Montes algorithm to obtain the prime ideal decomposition of $p$, and
 for each $\p|p$, take $a_\p=\min_{1\le i\le m}\{v_\p(\alpha_i)\}$.

\nn{\bf5. }Return the list of pairs $[\p,a_\p]$ for all $\p$ with $a_\p\ne0$. \medskip

The bottleneck of this routine is step 3. We get a fast facto\-rization routine in the number field $K$, as long as the integers $N$, $M$ attached to the ideal $\a$ may be easily factorized.

\section{Computation of generators}\label{secGenerators}
In \cite[Sec.4]{GMNalgorithm} we gave an algorithm to compute generators of the prime ideals as certain rational functions of the $\phi$-polynomials. Some inversions in $K$, one for each prime ideal, were needed. These inversions dominated the complexity of the algorithm, and they were a bottleneck that prevented the computation of generators for number fields of large degree.

In sections \ref{subsectPseudo} and \ref{subsectgenerators} we construct a two-element representation of prime ideals, which does not need any inversion in $K$. As a consequence, this construction works extremely fast in practice even for number fields of large degree (see section \ref{secKOM}). In section \ref{subsectwogen} we easily derive two-element representations of fractional ideals.

For any prime ideal $\p$ of $K$ we keep the notations for $\ty_\p$, $f_\p(x)$, $\phi_\p(x)$, $\ff{\p}$, $\t_\p$, $K_\p$, $\Z_{K_\p}$, as introduced in section \ref{secPadic}.

\subsection{Local generators of the prime ideals}\label{subsectPseudo}
\begin{definition}
A pseudo-generator of a prime ideal $\p$ of $K$ is an integral element $\pi\in\Z_K$ such that $v_\p(\pi)=1$.
\end{definition}

Let $p$ be a prime number, and let $\p=[p;\phi_1,\dots,\phi_r,\phi_\p]$ be a prime ideal factor of $p\Z_K$, corresponding to an $f$-complete type $\ty_\p$ with an added $(r+1)$-th level, as indicated in section \ref{subsecApprox}. In this section we show how to compute a pseudogenerator of $\p$ from the secondary invariants $u_i$,
$\op{Quot}_i$, of $\ty_\p$, for  $1\le i\le r$, computed along the flow of Montes algorithm as indicated in (\ref{secondaryinvariants}).

For each level $1\le i\le r$, let us denote:
$$\op{Quot}_i=[Q_{i,0}(x),\dots, Q_{i,e_i-1}(x)].
$$Recall that $Q_{i,0}(x)=1$, and for $0<j<e_i$, the polynomial $Q_{i,j}(x)\in\Z[x]$ is the  $(s_i-j)$-th quotient of the $\phi_i$-adic development of $f(x)$ (cf. (\ref{quotients})), where $s_i$ is the abscissa of the right end point of the side of slope $\lambda_i$ of $N^-_i(f)$. Also, let us define
$$H_{i,0}=0,\qquad H_{i,j}=\dfrac{u_i+j(|\lambda_i|+v_i(\phi_i))}{e_1\cdots e_{i-1}},
\quad \forall\,0<j<e_i.
$$

\begin{proposition}\label{quotientvalue}
For each level $1\le i\le r$ and subindex $\,0\le j<e_i$:
\begin{enumerate}
\item $v_\q(Q_{i,j}(\t))\ge e(\q/p)H_{i,j}$, for all prime ideals $\q\mid p$.
\item $v_\p(Q_{i,j}(\t))=e(\p/p)H_{i,j}$.
\end{enumerate}
\end{proposition}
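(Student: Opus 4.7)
My plan rests on a careful analysis of the Newton polygon $N_i(Q_{i,j})$ and its relation to $N_i(f)$. Since $Q_{i,j}=q_{s_i-j}$, the $\phi_i$-adic coefficients of $Q_{i,j}$ are $b_k:=a_{s_i-j+k}$ for $k\ge 0$, so $N_i(Q_{i,j})$ is the lower convex envelope of the portion of $N_i(f)$ with abscissa $s\ge s_i-j$, translated by $(-(s_i-j),-(s_i-j)\,v_i(\phi_i))$. Under this translation the vertex $(s_i,u_i+s_i v_i(\phi_i))$ of $N_i(f)$, the right end of the $\lambda_i$-side, goes to $(j,u_i+j v_i(\phi_i))$, which sits exactly on the line $L$ of slope $\lambda_i$ with $y$-intercept $H:=u_i+j(v_i(\phi_i)+|\lambda_i|)=(e_1\cdots e_{i-1})\,H_{i,j}$.

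The crucial step is to show that the touching set of $L$ with $N_i(Q_{i,j})$ is exactly this single vertex. For $k>j$, convexity of $N_i(f)$ does the job, because the slopes immediately to the right of $s_i$ are strictly greater than $\lambda_i$, forcing the polygon to strictly exceed the extension of $L$. For $0\le k<j$, an arithmetic argument applies: the ordinate of $L$ at $x=k$ equals $u_i+j v_i(\phi_i)+(j-k)h_i/e_i$, which is not an integer because $0<j-k<e_i$ and $\gcd(h_i,e_i)=1$, while $v_i(b_k\phi_i^k)$ is an integer, forcing strict inequality.

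Part (2) then follows at once: the touching set being a single point makes the residual polynomial $R_i(Q_{i,j})$ a nonzero constant, hence not divisible by the nonconstant $\psi_i$, and Proposition \ref{vgt} yields the equality $v(Q_{i,j}(\t_\p))=H/(e_1\cdots e_{i-1})=H_{i,j}$; multiplying by $e(\p/p)$ completes the claim.

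For part (1), I would use $f(\t_\q)=0$ to write
$$\phi_i(\t_\q)^{s_i-j}\,Q_{i,j}(\t_\q)=-\sum_{s<s_i-j}a_s(\t_\q)\phi_i(\t_\q)^s=\sum_{s\ge s_i-j}a_s(\t_\q)\phi_i(\t_\q)^s,$$
which yields two lower bounds on $v(Q_{i,j}(\t_\q))+(s_i-j)\,v(\phi_i(\t_\q))$. Combining the polygon inequality $v_i(a_s)\ge u_i+(s_i-s)(v_i(\phi_i)+|\lambda_i|)$ with the estimate $v(a_s(\t_\q))\ge v_i(a_s)/(e_1\cdots e_{i-1})$ (obtained by applying Proposition \ref{vgt} at level $i-1$, under the assumption that $\ty_\q$ shares the first $i-1$ levels with $\ty_\p$) and the value $v(\phi_i(\t_\q))=(v_i(\phi_i)+|\lambda_q|)/(e_1\cdots e_{i-1})$, where $\lambda_q$ is the level-$i$ slope of $\ty_\q$, a short case analysis reveals that the first sum delivers $v(Q_{i,j}(\t_\q))\ge H_{i,j}$ whenever $|\lambda_q|\le|\lambda_i|$, while the second does so when $|\lambda_q|\ge|\lambda_i|$. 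The main obstacle I anticipate is the case of primes $\q$ whose type diverges from $\ty_\p$ at some level $k<i$: the shared $v_i$ is then unavailable, and I would have to run a parallel argument at the common divergence level $k$, invoking Lemma \ref{vjphii} to keep track of how the $v_j$-valuations of the higher $\phi_i$'s propagate.
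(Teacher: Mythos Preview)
Your argument for item~(2) is correct and is essentially the paper's proof. The only difference is cosmetic: the paper analyzes $N_i\bigl(Q_{i,j}\,\phi_i^{s_i-j}\bigr)$ rather than $N_i(Q_{i,j})$, thereby avoiding your translation. It shows that the line of slope $\lambda_i$ meets this polygon only at the vertex $(s_i,\,u_i+s_iv_i(\phi_i))$, applies Proposition~\ref{vgt} to obtain $v\bigl(Q_{i,j}(\t_\p)\phi_i(\t_\p)^{s_i-j}\bigr)$, and then subtracts the known value $v\bigl(\phi_i(\t_\p)^{s_i-j}\bigr)$ coming from~(\ref{thmpolygon}). The two checks that the touching set is a single point---convexity of $N_i(f)$ to the right of $s_i$, and the integrality obstruction for abscissas strictly between $s_i-e_i$ and $s_i$---are identical in both arguments.

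For item~(1) the paper gives no self-contained argument: it simply cites \cite[Prop.~10]{GMNbasis}. Your sketch via $f(\t_\q)=0$ and the dichotomy on $|\lambda_q|$ is sound and does go through when $\ty_\q$ shares with $\ty_\p$ all data through level~$i$, including $\phi_{i,\q}=\phi_i$; your two estimates then combine exactly as you describe. But the obstacle you flag is genuine, and it is somewhat broader than you state. Already in the case $i(\ty_\p,\ty_\q)=i$ with $\phi_{i,\p}\ne\phi_{i,\q}$, your formula $v(\phi_i(\t_\q))=(v_i(\phi_i)+|\lambda_q|)/(e_1\cdots e_{i-1})$ is unavailable and must be replaced by the hidden-slope computation of Proposition~\ref{vpq}. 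When $i(\ty_\p,\ty_\q)<i$, not only $v_i$ but the very $\phi_i$-development defining $Q_{i,j}$ is foreign to $\ty_\q$, and one must pull the estimate down to the coincidence level and control both the $a_s$ and the powers of $\phi_i$ there. Completing all of these cases is exactly the content of the cited external result, so your proposal for item~(1) is a reasonable outline but not yet a proof.
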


\begin{proof}
Item 1 being proved in \cite[Prop.10]{GMNbasis}, let us prove item 2. Fix a level $1\le i\le r$ and a subindex $\,0\le j<e_i$. Let
$\ell_i=\ord_{\psi_{i-1}}R_{i-1}(f)$, and let $f(x)=\sum_{s\ge 0}a_s\phi_i^s$ be the $\phi_i$-adic development of $f(x)$.
The Newton polygon of $i$-th order of $f(x)$, $N_i(f)$, is the lower convex envelope of the cloud of points $(s,v_i(a_s\phi_i^s))$, for all $s\ge0$. The principal part $N_i^-(f)$ is equal to $N_i(f)\cap\left([0,\ell_i]\times\R\right)$; the typical shape of this polygon is illustrated in Figure 2. Let $S_{\lambda_i}$ be the side of slope $\lambda_i$ of this polygon, and $s_i$ the abscissa of the right end point of $S_{\lambda_i}$.

\begin{center}
\setlength{\unitlength}{5.mm}
\begin{picture}(20,11)
\put(-.15,8.85){$\bullet$}\put(1.85,5.85){$\bullet$}
\put(10.85,2.85){$\bullet$}\put(9.85,3.85){$\bullet$}
\put(8.85,3.5){$\circ$}\put(16.85,1.85){$\bullet$}
\put(0,-1.5){\line(0,1){12}}\put(-1,0){\line(1,0){19}}
\put(2,6.03){\line(-2,3){2}}\put(2,6){\line(-2,3){2}}
\put(2,6){\line(3,-1){9}}\put(2,6.03){\line(3,-1){9}}
\put(11,3){\line(6,-1){6}}\put(11,3.03){\line(6,-1){6}}
\multiput(2,6)(-.15,.05){15}{\mbox{\begin{scriptsize}.\end{scriptsize}}}
\multiput(11,3)(-.1,.1){10}{\mbox{\begin{scriptsize}.\end{scriptsize}}}
\put(8.5,6){\begin{footnotesize}$N_i^-(f)$\end{footnotesize}}
\put(5,5.2){\begin{footnotesize}$S_{\lambda_i}$\end{footnotesize}}
\multiput(17,-.6)(0,.25){11}{\vrule height2pt}
\multiput(11,-.1)(0,.25){13}{\vrule height2pt}
\multiput(10,-.6)(0,.25){19}{\vrule height2pt}
\multiput(9,-.1)(0,.25){15}{\vrule height2pt}
\put(10.15,.15){\begin{footnotesize}$t$\end{footnotesize}}
\put(11.2,.2){\begin{footnotesize}$s_i$\end{footnotesize}}
\put(17.2,.2){\begin{footnotesize}$\ell_i$\end{footnotesize}}
\put(8,-.6){\begin{footnotesize}$s_i-e_i$\end{footnotesize}}
\multiput(-.1,2)(.25,0){68}{\hbox to 2pt{\hrulefill }}
\put(-1.6,1.9){\begin{footnotesize}$v_i(f)$\end{footnotesize}}
\put(-.6,6.6){\begin{footnotesize}$H$\end{footnotesize}}
\put(.2,-.6){\begin{footnotesize}$0$\end{footnotesize}}
\put(13,-.6){\vector(-1,0){3}}\put(13,-.6){\vector(1,0){4}}
\put(12,-1.4){\begin{footnotesize}$N_i^-(q_t\phi_i^t)$\end{footnotesize}}
\end{picture}
\end{center}\vskip.4cm
\begin{center}
Figure 2
\end{center}

For any $0\le t\le \ell_i$, let $q_t(x)$ be the $t$-th quotient of the $\phi_i$-adic development (see (\ref{quotients})). We have  $f(x)=q_t(x)\phi_i(x)^t+r_t(x)$, with
$$
r_t(x)=\sum_{0\le s< t}a_s(x)\phi_i(x)^s,\qquad q_t(x)\phi_i(x)^t=\sum_{t\le s}a_s(x)\phi_i(x)^s.
$$
Hence, if $t_0$ is the smallest abscissa of a vertex of $N_i(f)$, such that $t_0\ge t$, we have
$$
N_i(q_t\phi_i^t)\cap \left([t_0,\infty)\times \R\right)=
N_i(f)\cap \left([t_0,\infty)\times \R\right).$$
Recall that $Q_{i,j}(x)=q_t(x)$, for $t=s_i-j$; for this value of $t$ we have $t_0=s_i$ (see Figure 2). On the other hand, all points in the cloud $(s,v_i(a_s\phi_i^s))$, for $s_i-e_i<s<s_i$ lie strictly above $S_{\lambda_i}$, because the point on $S_{\lambda_i}$ with integer coordinates and closest to the right end point has abscissa $s_i-e_i$. Hence, the line of slope $\lambda_i$ that first touches $N_i(q_{s_i-j}\phi_i^{s_i-j})$ from below is the line containg $S_{\lambda_i}$.
This line has ordinate at the origin (see Figure 2):
$$H=v_i\left(a_{s_i}\phi_i^{s_i}\right)+s_i|\lambda_i|=u_i+s_i(v_i(\phi_i)+|\lambda_i|).$$
On the other hand, this line touches $N_i(q_{s_i-j}\phi_i^{s_i-j})$ only at the point
$(s_i,v_i(a_{s_i}\phi_i^{s_i}))$, so that $R_i(q_{s_i-j}\phi_i^{s_i-j})(y)$ is a constant and $\op{Trunc}_i(\ty_\p)$ does not divide $q_{s_i-j}\phi_i^{s_i-j}$. Therefore, Proposition \ref{vgt} shows that
$$
v(q_{s_i-j}(\t_\p)\phi_i(\t_\p)^{s_i-j})=\dfrac {u_i+s_i(v_i(\phi_i)+|\lambda_i|)}{e_1\cdots e_{i-1}}.
$$
By the Theorem of the polygon \cite[Thm.3.1]{HN},
\begin{equation}\label{thmpolygon}
v(\phi_i(\t_\p))=\dfrac{v_i(\phi_i)+|\lambda_i|}{e_1\cdots e_{i-1}},
\end{equation} so that
$$
v(q_{s_i-j}(\t_\p))=\dfrac {u_i+j(v_i(\phi_i)+|\lambda_i|)}{e_1\cdots e_{i-1}}=H_{i,j}.
$$
By (\ref{tautology}), $v_\p(q_{s_i-j}(\t))=e(\p/p)H_{i,j}$.
\end{proof}

If $e(\p/p)=1$, then $\pi_\p:=p$ is a pseudo-generator of $\p$. If $e(\p/p)>1$, we can always find a pseudo-generator of $\p$ by computing a suitable product of quotients in the lists $\op{Quot}_i$, divided by a suitable power of $p$.

\begin{corollary}\label{products}	\mbox{\null}
\begin{enumerate}
\item Let $j_1,\dots,j_r$ be subindices satisfying, $0\le j_i<e_i$, for all $1\le i\le r$. Then, the following element belongs to $\Z_K$:
$$\pi_{j_1,\dots,j_r}:=Q_{1,j_1}(\t)\cdots Q_{r,j_r}(\t)/p^{\lfloor H_{1,j_1}+\cdots+H_{r,j_r}\rfloor}.$$
\item If $e(\p/p)>1$, there is a unique family  $j_1,\dots,j_r$ as above, for which
$v_\p(\pi_{j_1,\dots,j_r})=1$. This family may be recursively computed as follows:
$$\as{1.2}
\begin{array}{l}
 j_r\equiv h_r^{-1} \md{e_r},\\ \op{res}_r:=(j_rh_r-1)/e_r,\\
j_{r-1}\equiv  -h_{r-1}^{-1}(u_r+j_rv_r(\phi_r)+\op{res}_r) \md{e_{r-1}},\\ \op{res}_{r-1}:=(j_{r-1}h_{r-1}+u_r+j_rv_r(\phi_r)+\op{res}_r)/e_{r-1},\\
\qquad \cdots\qquad \cdots\\
j_1\equiv  -h_1^{-1}(u_2+j_2v_2(\phi_2)+\op{res}_2) \md{e_1}.
\end{array}
$$
\end{enumerate}
\end{corollary}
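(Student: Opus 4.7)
The plan is to derive both parts from the valuation formulas of Proposition \ref{quotientvalue}. For part (1), the element $\pi_{j_1,\dots,j_r}$ is a product of integral elements divided by a power of $p$, so only primes $\q$ of $K$ above $p$ can produce negative $\q$-adic valuations. Summing the inequalities of Proposition \ref{quotientvalue}(1) over $i$ yields
$$v_\q\bigl(Q_{1,j_1}(\t)\cdots Q_{r,j_r}(\t)\bigr) \;\ge\; e(\q/p)\,S,\qquad S := H_{1,j_1}+\cdots+H_{r,j_r}.$$
Since $e(\q/p) > 0$ and $S \ge \lfloor S\rfloor$, the right-hand side is at least $v_\q(p^{\lfloor S\rfloor}) = e(\q/p)\lfloor S\rfloor$, whence $\pi_{j_1,\dots,j_r} \in \Z_K$.

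For part (2), Proposition \ref{quotientvalue}(2) upgrades these inequalities to equalities at $\p$, giving
$$v_\p(\pi_{j_1,\dots,j_r}) \;=\; e(\p/p)\,S \,-\, e(\p/p)\lfloor S\rfloor \;\in\; [0,\,e(\p/p)).$$
Since $v_\p$ is integer-valued, the quantity $N := e(\p/p)\,S$ is an integer, and requiring $v_\p(\pi_{j_1,\dots,j_r}) = 1$ is equivalent to the congruence $N \equiv 1 \pmod{e_1\cdots e_r}$. The hypothesis $e(\p/p) > 1$ guarantees that $1$ is an admissible residue. Using $|\lambda_i|=h_i/e_i$ and setting $E_i := e_i\cdots e_r$ (with $E_{r+1}=1$), and restricting the sum to indices $i$ with $j_i \ge 1$ (where $H_{i,0} = 0$ by the convention adopted in the definition), one obtains the expansion
$$N \;=\; \sum_{i} \bigl(E_i u_i \,+\, j_i h_i E_{i+1} \,+\, j_i v_i(\phi_i)\,E_i\bigr).$$

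I plan to solve this congruence by a reverse induction on the level. Modulo $e_r$, every summand in the expansion of $N$ is divisible by $e_r$ except the single term $j_r h_r$ (contributed at $i = r$ via $E_{r+1}=1$), so $N \equiv 1 \pmod{e_r}$ forces $j_r \equiv h_r^{-1}\pmod{e_r}$; coprimality $\gcd(h_r,e_r) = 1$ makes $j_r$ unique in $\{0,\dots,e_r-1\}$. Writing $j_r h_r = 1 + e_r\op{res}_r$ and dividing $N - 1$ by $e_r$ produces a congruence of the same shape modulo $e_{r-1}\cdots e_1$ whose leading unknown is $j_{r-1}h_{r-1}$ (again through a trailing factor $1$) and whose accumulated constant term is $u_r + j_r v_r(\phi_r) + \op{res}_r$---exactly the quantity appearing in the paper's formula for $j_{r-1}$. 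Iterating this reduction down to level one uniquely determines the tuple $(j_1,\dots,j_r)$.

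The main technical obstacle is the divisibility bookkeeping that makes each reduction step work: at level $i$ one must verify that the numerator defining $\op{res}_i$ is divisible by $e_i$ (so that $\op{res}_i \in \Z$) and that dividing the residual congruence by $e_i$ leaves a new congruence of modulus exactly $e_{i-1}\cdots e_1$. Both points follow from partitioning the summands of $N$ according to their divisibility by the $e_k$: any term carrying a factor $E_k$ or $E_{k+1}$ with $k \ge i$ drops out at the present stage and contributes cleanly at a later one. Once these invariants are established, the paper's recurrence is recovered verbatim, and uniqueness of $(j_1,\dots,j_r)$ is immediate from $\gcd(h_i,e_i) = 1$ at every level.
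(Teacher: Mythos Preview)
Your proof is correct and follows essentially the same approach as the paper: both parts are derived from Proposition~\ref{quotientvalue}, with part~(2) reducing to the congruence $H_{1,j_1}+\cdots+H_{r,j_r}\equiv 1/e(\p/p)\pmod{\Z}$ solved recursively level by level using $\gcd(h_i,e_i)=1$. Your treatment of the recursive step is in fact more detailed than the paper's, which simply writes out the nested-fraction form of the congruence and asserts that it ``clearly'' has a unique solution obtainable by the stated procedure.
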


\begin{proof}
Item 1 is an immediate consequence of item 1 of Proposition
\ref{quotientvalue}.

Also, by Proposition \ref{quotientvalue},
$$
v_\p(\pi_{j_1,\dots,j_r})=e(\p/p)\left(H_{1,j_1}+\cdots+H_{r,j_r}-\lfloor H_{1,j_1}+\cdots+H_{r,j_r}\rfloor\right).
$$
Thus, item 2 states that there is a unique family $j_1,\dots,j_r$ such that
$$
H_{1,j_1}+\cdots+H_{r,j_r}\equiv \dfrac 1{e(\p/p)} \md{\Z}.
$$
Since $e(\p/p)=e_1\cdots e_r$ and $|\lambda_i|=h_i/e_i$, this is equivalent to:
\begin{align*}
u_1+j_1v_1(\phi_1)+&\dfrac{j_1h_1+u_2+j_2v_2(\phi_2)}{e_1}+\cdots\\
&\cdots+\dfrac{j_{r-1}h_{r-1}+u_r+j_rv_r(\phi_r)}{e_1\cdots e_{r-1}}+\dfrac{j_rh_r}{e_1\cdots e_r}\equiv \dfrac{1}{e_1\cdots e_r}  \md{\Z}.
\end{align*}
Clearly this congruence has a unique solution $j_1, \dots, j_r$ satisfying $0\le j_i<e_i$, for all $1\le i\le r$, and this solution may be recursively obtained by the procedure described in item 2.
\end{proof}

The only property of $\ty_\p$ that we used in in Corollary \ref{products} is: $e_1\cdots e_r=e(\p/p)$. Thus, we don't need to use all levels of $\ty_\p$ to compute a pseudo-generator of $\p$; in practice we take $r$ to be the minimum level such that
$e_1\cdots e_r=e(\p/p)$.

\subsection{Generators of the prime ideals}\label{subsectgenerators}
Let $p$ be a prime number, and $\P$ the set of prime ideals of $K$ lying over $p$. Once we have pseudo-generators $\pi_\p\in\p$ of all $\p\in\P$, in order to find generators we need only to compute a family of integral elements, $\{b_\p\in \Z_K\}_{\p\in\P}$, satisfying:
\begin{equation}\label{bp}
v_\p(b_\p)=0,\ \forall\,\p\in\P,\qquad v_\q(b_\p)>1,\ \forall\,\q,\p\in\P,\ \q\ne\p.
\end{equation}
Then, for each $\p\in\P$, the integral element:
$$
\alpha_\p:=b_\p\pi_\p+\sum_{\q\in\P, \q\ne\p} b_\q\in\Z_K
$$
clearly satisfies: $v_\p(\alpha_\p)=1$, $v_\q(\alpha_\p)=0$, for all $\q\ne \p$. Therefore, $\p$ is the ideal generated by $p$ and $\alpha_\p$.
The rest of this section is devoted to the construction of these multipliers $\{b_\p\}$.

Let $(\ty_\p)_{\p\in\P}$ be the parameterization of the set $\P$ by a family of $f$-complete types obtained by an application of Montes algorithm. As usual, we suppose that each $\ty_\p$ has been conveniently enlarged with an $(r_\p+1)$-th level, as indicated in section \ref{subsecApprox}. From now on we provide the invariants of $\ty_\p$ with a subscript $\p$ to distinguish the prime ideal they belong to: $r_\p, \phi_{i,\p}, m_{i,\p}, \la_{i,\p},$ etc.

The integral elements $b_\p$ will be constructed as suitable products of $\phi$-polynomials divided by suitable powers of $p$. The crucial ingredient is Proposition \ref{vpq}, that computes $v_\p(\phi_{i,\q}(\t))$ for all $\p\ne\q$ in $\P$, and all $1\le i\le r_\q+1$.

\begin{definition}
For any pair $\p,\q\in\P$, we define the \emph{index of coincidence} between the types  $\ty_\p$
and $\ty_\q$ as:
$$
i(\ty_\p,\ty_\q)=\left\{\begin{array}{ll}
0,&\mbox{if }\psi_{0,\p}\ne\psi_{0,\q},\\
\min\left\{j\in\Z_{>0}\tq (\phi_{j,\p},\lambda_{j,\p},\psi_{j,\p})\ne
(\phi_{j,\q},\lambda_{j,\q},\psi_{j,\q})\right\},&\mbox{if }\psi_{0,\p}=\psi_{0,\q}.
\end{array}
\right.
$$
Alternatively, $i(\ty_\p,\ty_\q)$ is the least subindex $j$ for which $\op{Trunc}_j(\ty_\p)\ne \op{Trunc}_j(\ty_\q)$.
\end{definition}

\begin{remark}\label{coincide}\mbox{\null}
By definition,
$$\phi_{i,\p}=\phi_{i,\q},\quad \lambda_{i,\p}=\lambda_{i,\q},\quad \psi_{i,\p}=\psi_{i,\q}, \quad\forall\,i< i(\ty_\p,\ty_\q).$$  Hence, by the definition of the $p$-adic valuations $v_{i,\p}$, $v_{i,\q}$, and by \cite[Thm. 2.11]{HN}, we get:
$$v_{i,\p}=v_{i,\q}, \quad m_{i,\p}=m_{i,\q}, \quad
v_{i,\p}(\phi_{i,\p})=v_{i,\q}(\phi_{i,\q}),\quad \forall\,i\le i(\ty_\p,\ty_\q).
$$
\end{remark}

\begin{lemma}\label{lessthanr}
If  $\p,\q\in\P$, and $\p\ne\q$, then $i(\ty_\p,\ty_\q)\le\min\{r_\p,r_\q\}$.
\end{lemma}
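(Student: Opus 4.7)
The plan is to argue by contradiction. Suppose $i(\ty_\p,\ty_\q) > \min\{r_\p,r_\q\}$ and, by symmetry, assume $r_\p \le r_\q$. Using the alternative description of the index of coincidence, the hypothesis translates to $\op{Trunc}_{r_\p}(\ty_\p) = \op{Trunc}_{r_\p}(\ty_\q)$. Since $\ty_\p$ has order exactly $r_\p$, this reads $\ty_\p = \op{Trunc}_{r_\p}(\ty_\q)$ as types of order $r_\p$. My aim is to derive a contradiction with the one-to-one correspondence between the $\ty_\p$'s and the $p$-adic factors of $f(x)$.

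If $r_\p = r_\q$, then $\ty_\p = \ty_\q$. But the parameterization of $\P$ by $f$-complete optimal types produced by Montes' algorithm is injective: each $f$-complete type $\ty$ singles out a unique monic irreducible factor $f_\ty(x)\in\Z_p[x]$ of $f(x)$, and distinct primes above $p$ correspond to distinct $p$-adic factors. Hence $\p=\q$, contradicting the hypothesis.

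The remaining case is $r_\p < r_\q$. The plan is to show $\ty_\p \mid f_\q$, which directly violates condition (2) of the Montes parameterization, namely that $\ty_j \nmid f_{\ty_i}$ for $j\ne i$. Since $\ty_\q$ is $f_\q$-complete, $\psi_{r_\q,\q}$ appears in $R_{r_\q,\q}(f_\q)$ with multiplicity one, so in particular $\ty_\q \mid f_\q$ in the sense of Definition \ref{defs}. Applying the propagation principle that divisibility of a polynomial by a type of order $r$ implies divisibility by every truncation of lower order, we obtain $\op{Trunc}_{r_\p}(\ty_\q) \mid f_\q$. Since $\op{Trunc}_{r_\p}(\ty_\q) = \ty_\p$, this gives $\ty_\p \mid f_\q$, the required contradiction.

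The only substantive point requiring justification is the propagation of divisibility to truncations. This is essentially built into the recursive construction of a type: level $j+1$ only enters the chain because $\psi_j$ divides the residual polynomial $R_j(\cdot)$ at level $j$, and the residual operators at consecutive levels are related through the Newton polygon of slope $\lambda_{j+1}$ as in \cite[Prop.3.5]{HN} and the theorem of the polygon \cite[Thm.3.1]{HN}. A short downward induction on $j$, observing that $\psi_{j+1,\q}\mid R_{j+1,\q}(f_\q)$ forces the slope $\lambda_{j+1,\q}$ to appear on $N_{j+1,\q}(f_\q)$ with a residual factor dividing $\psi_{j+1,\q}$, and hence forces $\psi_{j,\q}\mid R_{j,\q}(f_\q)$, closes the argument. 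I expect this inductive step to be the main (though standard) technical ingredient.
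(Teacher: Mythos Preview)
Your argument is correct and follows essentially the same idea as the paper's proof: once $\op{Trunc}_{r_\p}(\ty_\p)=\op{Trunc}_{r_\p}(\ty_\q)$, the $f$-completeness of this common truncation forces it to single out a unique $p$-adic factor of $f$, which must simultaneously be $f_\p$ and $f_\q$. The paper does this in one stroke without your case split, invoking item~(2) of Definition~\ref{defs} directly; your Case~2 contradiction via condition~(2) of the Montes parameterization together with propagation of divisibility to truncations is the same mechanism made explicit. One small notational slip: in the conventions of section~\ref{subsecApprox} the type $\ty_\p$ carries the extra $(r_\p{+}1)$-th level, so what you are really using throughout is $\op{Trunc}_{r_\p}(\ty_\p)$ rather than $\ty_\p$ itself.
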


\begin{proof}
Suppose $r_\p\le r_\q$ and $i(\ty_\p,\ty_\q)=r_\p+1$. Then, $\op{Trunc}_{r_\p}(\ty_\p)=\op{Trunc}_{r_\p}(\ty_\q)$. Since the type $\op{Trunc}_{r_\p}(\ty_\p)$ is $f$-complete, it singles out a unique $p$-adic irreducible factor of $f(x)$ (item 2 of Definition \ref{defs}). Hence, $\op{Trunc}_{r_\p}(\ty_\q)$ is also $f$-complete and it singles out the same $p$-adic irreducible factor of $f(x)$. This implies that $\p=\q$.
\end{proof}

By (\ref{tautology}) and (\ref{thmpolygon}), for all $1\le i\le r_\p+1$ we have
\begin{equation}\label{p=q}
v_\p(\phi_{i,\p}(\t))=e(\p/p)\,\dfrac{v_{i,\p}(\phi_{i,\p})+|\lambda_{i,\p}|}{e_{1,\p}\cdots e_{i-1,\p}}.
\end{equation}

In order to compute the values of $v_\p(\phi_{i,\q}(\t))$, for $\p\ne\q$, we need still another definition.
	
\begin{definition}\label{gcphi}
Let $\p,\q\in\P$, $\q\ne\p$, and $j=i(\ty_\p,\ty_\q)$. Let $s_\p=\#\op{Refinements}_{j,\p}$, and consider the list $\op{Ref}_\p$ obtained by extending the list $\op{Refinements}_{j,\p}$ by adding the pair $[\phi_{j,\p},\lambda_{j,\p}]$ at the last position:
$$
\op{Ref}_\p=\left[\left[\phi_{j,\p}^{(1)},\lambda_{j,\p}^{(1)}\right], \cdots ,\left[\phi_{j,\p}^{(s_\p)},\lambda_{j,\p}^{(s_\p)}\right],\left[\phi_{j,\p}^{(s_\p+1)},\lambda_{j,\p}^{(s_\p+1)}\right]:=\left[\phi_{j,\p},\lambda_{j,\p}\right]\right].
$$
Let $\op{Ref}_\q$ be the analogous list for the prime ideal $\q$.

We define the \emph{greatest common $\phi$-polynomial} of the pair $(\ty_\p,\ty_q)$ to be the more advanced common $\phi$-polynomial in the two lists $\op{Ref}_\p$, $\op{Ref}_\q$. We denote it by:
$$
\phi(\p,\q):=\phi_{j,\p}^{(k)}=\phi_{j,\q}^{(k)},
$$
for the maximum index $k$ such that  $\phi_{j,\p}^{(k)}=\phi_{j,\q}^{(k)}$.

We define the \emph{hidden slopes} of the pair $(\ty_\p,\ty_\q)$ to be: $\lambda_\p^\q:=\lambda_{j,\p}^{(k)}$, $\lambda_\q^\p:=\lambda_{j,\q}^{(k)}$.
\end{definition}

\noindent{\bf Remarks. }\medskip

(1) \ By the concrete way the processes of branching, enlarging and/or refining were defined, this polynomial $\phi(\p,\q)$ always exists. In fact, let us show that we must have $\phi_{j,\p}^{(1)}=\phi_{j,\q}^{(1)}$.
Since $\op{Trunc}_{j-1}(\ty_\p)=\op{Trunc}_{j-1}(\ty_\q)$, this type had some original representative (say) $\phi_j$. By considering $N_{\phi_j,v_j}^-(f)$ and the irreducible factors of all residual polynomials of all sides, we had different branches $(\lambda,\psi)$ to analyze; if there was only one branch, the algorithm necessarily performed a refinement step, because otherwise we would have $i(\ty_\p,\ty_\q)>j$. After eventually a finite number of these unibranch refinement steps (that were not stored in the list $\op{Refinements}_j$), we considered some representative, let us call it $\phi_j$ again, leading to several branches. One of these branches led later to the type $\ty_\p$ and one of them (maybe still the same) to the type $\ty_\q$. If the $\p$-branch experimented refinement, the list $\op{Refinements}_{j,\p}$ had $\phi_{j,\p}^{(1)}=\phi_j$ as its initial $\phi$-polynomial; if the $\p$-branch was $f$-complete or had to be enlarged, then the list $\op{Refinements}_{j,\p}$ remained empty and we had $\phi_{j,\p}=\phi_j$. In any case, $\phi_j$ is the first $\phi$-polynomial of the list $\op{Ref}_\p$.\medskip

(2) \ All $\phi_{j,\p}^{(\ell)}$, $\phi_{j,\q}^{(\ell)}$ are representatives of  $\ty_{j-1}:=\op{Trunc}_{j-1}(\ty_\p)=\op{Trunc}_{j-1}(\ty_\q)$; in particular, all these polynomials have degree $m_j$. With the obvious meaning for $\psi_{j,\p}^{( \ell)}$, we have necessarily:
$$
\left[\phi_{j,\p}^{(k)},\lambda_{j,\p}^{(k)},\psi_{j,\p}^{(k)}\right]\ne
\left[\phi_{j,\q}^{(k)},\lambda_{j,\q}^{(k)},\psi_{j,\q}^{(k)}\right],\quad
\left[\phi_{j,\p}^{(\ell)},\lambda_{j,\p}^{(\ell)},\psi_{j,\p}^{(\ell)}\right]=
\left[\phi_{j,\q}^{(\ell)},\lambda_{j,\q}^{(\ell)},\psi_{j,\q}^{(\ell)}\right],
$$
for all $1\le \ell<k$. Thus, $\phi(\p,\q)$ is the first representative of $\ty_{j-1}$ for which the branches of $\ty_\p$ and  $\ty_\q$ are different.\medskip

(3) \ Caution: we may have $\op{Ref}_\p=\op{Ref}_\q$. In this case $\phi(\p,\q)=\phi_{j,\p}= \phi_{j,\q}$ and $\lambda_\p^\q=\lambda_{j,\p}=\lambda_{j,\q}=\lambda_\q^\p$; the branches of $\ty_\p$ and $\ty_\q$ are distinguished by $\psi_{j,\p}\ne\psi_{j,\q}$.    \medskip

\begin{proposition}\label{vpq}
Let $\p,\q\in\P$, $\p\ne \q$, and $j=i(\ty_\p,\ty_\q)$. Let $\phi(\p,\q)$ be the greatest common $\phi$-polynomial  of the pair $(\ty_\p,\ty_\q)$ and $\lambda_\p^\q$, $\lambda_\q^\p$ the hidden slopes. For any $1\le i\le r_\q+1$,
$$
\as{2.2}
\dfrac{v_\p(\phi_{i,\q}(\t))}{e(\p/p)}=\left\{
\begin{array}{ll}
0,&\mbox{if }j=0,\\
\dfrac{v_i(\phi_i)+|\lambda_i|}{e_1\cdots e_{i-1}}
,&\mbox{if }i<j,\\
\dfrac{v_j(\phi_j)+|\lambda_\p^\q|}{e_1\cdots e_{j-1}},&\mbox{if $i=j$ and }\, \phi_{j,\q}=\phi(\p,\q),\\
\dfrac{v_j(\phi_j)+\min\{|\lambda_\p^\q|,|\lambda_\q^\p|\}}{e_1\cdots e_{j-1}},&\mbox{if $i=j$ and }\phi_{j,\q}\ne \phi(\p,\q),\\
\dfrac{m_{i,\q}}{m_j}\cdot\dfrac{v_j(\phi_j)+\min\{|\lambda_\p^\q|,|\lambda_\q^\p|\}}{e_1\cdots e_{j-1}},&\mbox{if }i>j>0.
\end {array}
\right.
$$

In these formulas we omit the subscripts $\p$, $\q$ when the invariants of the two types coincide (cf. Remark \ref{coincide}).

\end{proposition}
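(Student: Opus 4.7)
By (\ref{tautology}) it suffices to compute $v(\phi_{i,\q}(\t_\p))$, and I would do so by applying Proposition~\ref{vgt} with $g=\phi_{i,\q}$ to an appropriate truncation or ancestor of $\ty_\p$. The two easy cases are handled at once. When $j=0$, a short induction on $i$ shows that $\phi_{i,\q}$ reduces modulo $p$ to a power of $\psi_{0,\q}$ (each $\phi_{i+1,\q}$ is monic of degree $m_{i+1,\q}$ with $\phi_{i,\q}$-adic principal term $\phi_{i,\q}^{m_{i+1,\q}/m_{i,\q}}$); combined with $\psi_{0,\p}\ne\psi_{0,\q}$ this gives $\psi_{0,\p}\nmid R_0(\phi_{i,\q})$, so (\ref{v=0}) yields $v(\phi_{i,\q}(\t_\p))=0$. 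When $i<j$, Remark~\ref{coincide} identifies $\phi_{i,\q}=\phi_{i,\p}$ and the formula is immediate from (\ref{p=q}).

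The heart of the argument is the case $i=j$. I would consider the ancestor type $\ty_\p^{\,\prime}$ obtained from $\op{Trunc}_{j-1}(\ty_\p)$ by attaching the triple $(\phi(\p,\q),\lambda_\p^\q,\psi_\p^\q)$; by construction this type appears along the execution of Montes algorithm applied to $f$ and therefore divides $f_\p$. In the subcase $\phi_{j,\q}=\phi(\p,\q)$, the $\phi_{j,\q}$-adic expansion of $\phi_{j,\q}$ is simply $1\cdot\phi_{j,\q}$, so the Newton polygon $N_{\phi_{j,\q},v_j}(\phi_{j,\q})$ collapses to the point $(1,v_j(\phi_j))$, whose attached residual is a nonzero constant and hence not divisible by $\psi_\p^\q$. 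Proposition~\ref{vgt} immediately gives the third case of the formula, using Remark~\ref{coincide} to identify $v_j(\phi_{j,\q})=v_j(\phi_j)$. In the subcase $\phi_{j,\q}\ne\phi(\p,\q)$, the $\phi(\p,\q)$-adic expansion $\phi_{j,\q}=\phi(\p,\q)+(\phi_{j,\q}-\phi(\p,\q))$ produces a polygon with two vertices $(0,v_j(\phi_{j,\q}-\phi(\p,\q)))$ and $(1,v_j(\phi_j))$. The decisive ingredient is the identity
$$
v_j(\phi_{j,\q}-\phi(\p,\q))=v_j(\phi_j)+|\lambda_\q^\p|,
$$
which I would prove by unwinding that the refinement from $\phi(\p,\q)$ to $\phi_{j,\q}$ on the $\q$-branch is produced by the side of slope $\lambda_\q^\p$ of $N_{\phi(\p,\q),v_j}(f_\q)$. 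Once this is at hand, the line of slope $\lambda_\p^\q$ first meets the segment at its right or left endpoint according to whether $|\lambda_\p^\q|\le|\lambda_\q^\p|$ or not, and a short computation shows the ordinate at the origin always equals $v_j(\phi_j)+\min\{|\lambda_\p^\q|,|\lambda_\q^\p|\}$. The equality condition in Proposition~\ref{vgt} is verified in each configuration by checking that $\psi_\p^\q$ does not divide the residual polynomial attached to the side or vertex touched.

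For $i>j>0$ I would reduce to the case $i=j$ via the multiplicative formula
$v(\phi_{i,\q}(\t_\p))=(m_{i,\q}/m_j)\,v(\phi_{j,\q}(\t_\p))$,
extracted from the $\phi_{j,\q}$-adic expansion of $\phi_{i,\q}$: Lemma~\ref{vjphii} controls the $v_j$-valuation of every coefficient, and because $\t_\p$ is not a root of any $\ty_\q$-type factor, the value $v(\phi_{j,\q}(\t_\p))$ just computed is strictly smaller than the generic valuation $(v_j(\phi_j)+|\lambda_{j,\q}|)/(e_1\cdots e_{j-1})$ attained at roots of $f_\q$. Consequently the principal term $\phi_{j,\q}^{m_{i,\q}/m_j}$ dominates in the expansion after evaluation at $\t_\p$, yielding the fifth case. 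I expect the principal obstacle to be the identity $v_j(\phi_{j,\q}-\phi(\p,\q))=v_j(\phi_j)+|\lambda_\q^\p|$, which encodes precisely how a refinement step absorbs the slope it was designed to eliminate; everything else is routine Newton-polygon bookkeeping with Proposition~\ref{vgt} and Remark~\ref{coincide}.
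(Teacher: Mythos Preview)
Your treatment of the easy cases ($j=0$ and $i<j$) is fine, and your two-point Newton polygon analysis for $i=j$ with $\phi_{j,\q}\ne\phi(\p,\q)$ is essentially the paper's argument written out in coordinates: the paper packages your ``key identity'' $v_j(\phi_{j,\q}-\phi(\p,\q))=v_j(\phi_j)+|\lambda_\q^\p|$ inside the assertion that a certain modified type $\tilde{\ty}_\p$ is still a type, which it justifies by citing \cite[Cor.3.6]{GMNalgorithm}. (Your sketch of that identity via ``a refinement step absorbs its slope'' would need care, since there may be unrecorded unibranch refinements between $\phi(\p,\q)$ and $\phi_{j,\q}$; the cited corollary handles this uniformly.)

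The real gap is your reduction of the case $i>j>0$ to $i=j$. The multiplicative formula
\[
v(\phi_{i,\q}(\t_\p))=\frac{m_{i,\q}}{m_j}\,v(\phi_{j,\q}(\t_\p))
\]
is false in general. Take the simplest configuration $\phi_{j,\p}=\phi_{j,\q}=\phi(\p,\q)$ with $|\lambda_{j,\p}|>|\lambda_{j,\q}|$. Then $\lambda_\p^\q=\lambda_{j,\p}$ and $\lambda_\q^\p=\lambda_{j,\q}$, so the third case of the proposition gives $v(\phi_{j,\q}(\t_\p))=(v_j(\phi_j)+|\lambda_{j,\p}|)/(e_1\cdots e_{j-1})$, whereas the fifth case gives $v(\phi_{i,\q}(\t_\p))=(m_{i,\q}/m_j)(v_j(\phi_j)+|\lambda_{j,\q}|)/(e_1\cdots e_{j-1})$; these disagree. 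Your premise that $v(\phi_{j,\q}(\t_\p))$ is \emph{strictly smaller} than the ``generic'' value $(v_j(\phi_j)+|\lambda_{j,\q}|)/(e_1\cdots e_{j-1})$ is exactly backwards here---it is strictly larger---so in the $\phi_{j,\q}$-adic expansion of $\phi_{i,\q}$ it is the constant term, not the principal term, that governs the valuation at $\t_\p$.

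The paper avoids any such reduction. For $i>j>0$ with $\phi_{j,\p}=\phi_{j,\q}$ it applies Proposition~\ref{vgt} directly at level $j$: since $N_{j,\p}(\phi_{i,\q})=N_{j,\q}(\phi_{i,\q})$ is one-sided of slope $\lambda_{j,\q}$ (proof of Lemma~\ref{vjphii}), one slides a line of slope $\lambda_{j,\p}$ up to this segment and reads off $H=(m_{i,\q}/m_j)(v_j(\phi_j)+\min\{|\lambda_{j,\p}|,|\lambda_{j,\q}|\})$, checking in each of the three slope configurations that $\op{Trunc}_j(\ty_\p)\nmid\phi_{i,\q}$. When $\phi_{j,\p}\ne\phi_{j,\q}$ (or $i=j$ with $\phi_{j,\q}\ne\phi(\p,\q)$), the paper inserts $\phi(\p,\q)$ as an extra level to build non-optimal types $\tilde{\ty}_\p,\tilde{\ty}_\q$ that share the same $j$-th $\phi$-polynomial, and then the same direct polygon argument applies.
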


\begin{proof}
The case $j=0$ was seen in (\ref{v=0}). The cases $i<j$ and $i=j$, $\phi_{j,\q}=\phi(\p,\q)$, are a consequence of (\ref{p=q}).

Suppose $i>j>0$ and $\phi_{j,\p}=\phi_{j,\q}$; we have then, $\phi(\p,\q)=\phi_{j,\p}=\phi_{j,\q}$ and
$\lambda_\p^\q=\lambda_{j,\p}$, $\lambda_\q^\p=\lambda_{j,\q}$. We compute $v_\p(\phi_{i,\q}(\t))$ by applying Proposition \ref{vgt} to the polynomial
$g(x)=\phi_{i,\q}(x)$ and the type $\ty_\p$. Since $v_{j,\p}=v_{j,\q}$ and $\phi_{j,\p}=\phi_{j,\q}$, we have $N_{j,\p}(\phi_{i,\q})=N_{j,\q}(\phi_{i,\q})$.
On the other hand, we saw in the proof of Lemma \ref{vjphii} that $N_{j,\q}(\phi_{i,\q})$ is one-sided of slope $\lambda_{j,\q}$. Figure 3 shows the three possibilities for the line $L_{\lambda_{j,\p}}$ of slope $\lambda_{j,\p}$ that first touches  $N_{j,\p}(\phi_{i,\q})$ from below.

\begin{center}
\setlength{\unitlength}{5.mm}
\begin{picture}(22,5.5)
\put(.8,3.8){$\bullet$}\put(3.8,.8){$\bullet$}
\put(0,0){\line(1,0){6}}\put(1,-1){\line(0,1){6}}
\put(4,1){\line(-1,1){3}}\put(4.02,1){\line(-1,1){3}}
\multiput(.45,4.85)(.05,-.1){30}{\mbox{\begin{scriptsize}.\end{scriptsize}}}
\multiput(4,-.1)(0,.25){5}{\vrule height1pt}
\multiput(.9,1)(.25,0){12}{\hbox to 2pt{\hrulefill }}
\put(.55,-.6){\begin{footnotesize}$0$\end{footnotesize}}
\put(-1.3,.9){\begin{footnotesize}$v_j(\phi_{i,\q})$\end{footnotesize}}
\put(3,-.6){\begin{footnotesize}$m_{i,\q}/m_j$\end{footnotesize}}
\put(2.6,2.6){\begin{footnotesize}$\la_{j,\q}$\end{footnotesize}}
\put(1.65,1.6){\begin{footnotesize}$L_{\lambda_{j,\p}}$\end{footnotesize}}
\put(.1,3.8){\begin{footnotesize}$H$\end{footnotesize}}
\put(1.2,-1.6){$\la_{j,\p}<\la_{j,\q}$}
\put(11.8,.8){$\bullet$}\put(8.85,3.8){$\bullet$}
\put(8,0){\line(1,0){6}}\put(9,-1){\line(0,1){6}}
\put(12,1){\line(-1,1){3}}\put(12.02,1){\line(-1,1){3}}
\multiput(9,3.9)(-.1,.1){7}{\mbox{\begin{scriptsize}.\end{scriptsize}}}
\multiput(12,.9)(.1,-.1){7}{\mbox{\begin{scriptsize}.\end{scriptsize}}}
\multiput(12,-.1)(0,.25){5}{\vrule height1pt}
\multiput(8.9,1)(.25,0){12}{\hbox to 2pt{\hrulefill }}
\put(8.55,-.6){\begin{footnotesize}$0$\end{footnotesize}}
\put(6.7,.9){\begin{footnotesize}$v_j(\phi_{i,\q})$\end{footnotesize}}
\put(11,-.6){\begin{footnotesize}$m_{i,\q}/m_j$\end{footnotesize}}
\put(10.6,2.6){\begin{footnotesize}$\la_{j,\q}$\end{footnotesize}}
\put(12.8,.5){\begin{footnotesize}$L_{\lambda_{j,\p}}$\end{footnotesize}}
\put(8.2,3.8){\begin{footnotesize}$H$\end{footnotesize}}
\put(9.2,-1.6){$\la_{j,\p}=\la_{j,\q}$}
\put(19.8,.8){$\bullet$}\put(16.85,3.8){$\bullet$}
\put(16,0){\line(1,0){6}}\put(17,-1){\line(0,1){6}}
\put(20,1){\line(-1,1){3}}\put(20.02,1){\line(-1,1){3}}
\multiput(21,.4)(-.1,.05){47}{\mbox{\begin{scriptsize}.\end{scriptsize}}}
\multiput(20,-.1)(0,.25){5}{\vrule height1pt}
\multiput(16.9,1)(.25,0){12}{\hbox to 2pt{\hrulefill }}
\put(16.55,-.6){\begin{footnotesize}$0$\end{footnotesize}}
\put(14.7,.9){\begin{footnotesize}$v_j(\phi_{i,\q})$\end{footnotesize}}
\put(19,-.6){\begin{footnotesize}$m_{i,\q}/m_j$\end{footnotesize}}
\put(18.6,2.6){\begin{footnotesize}$\la_{j,\q}$\end{footnotesize}}
\put(20.6,.9){\begin{footnotesize}$L_{\lambda_{j,\p}}$\end{footnotesize}}
\put(16.2,2.2){\begin{footnotesize}$H$\end{footnotesize}}
\put(16.9,2.5){\line(1,0){.2}}
\put(17.2,-1.6){$\la_{j,\p}>\la_{j,\q}$}
\end{picture}
\end{center}\bigskip
\begin{center}
Figure 3
\end{center}

A glance at Figure 3 shows that
$$
H=v_j(\phi_{i,\q})+\dfrac{m_{i,\q}}{m_j}\,\min\{|\lambda_{j,\p}|,|\lambda_{j,\q}|\}=
\dfrac{m_{i,\q}}{m_j}\,(v_j(\phi_j)+\min\{|\lambda_{j,\p}|,|\lambda_{j,\q}|\}),
$$
the last equality by Lemma \ref{vjphii}. Now, if $\lambda_{j,\p}\ne \lambda_{j,\q}$, the line $L_{\lambda_{j,\p}}$ touches the polygon only at one point, and the residual polynomial $R_{j,\p}(\phi_{i,\q})(y)$ is a cons\-tant. If  $\lambda_{j,\p}=\lambda_{j,\q}$, then
 $L_{\lambda_{j,\p}}$ contains $N_{j,\p}(\phi_{i,\q})$ and $R_{j,\p}(\phi_{i,\q})=
R_{j,\q}(\phi_{i,\q})$ is a power of $\psi_{j,\q}$, up to a multiplicative constant. In this case, necessarily $\psi_{j,\q}\ne\psi_{j,\p}$, by the definition of $j=i(\ty_\p,\ty_\q)$. Therefore, $\op{Trunc}_j(\ty_\p)$ never divides $\phi_{i,\q}$, and Proposition \ref{vgt} shows that $v(\phi_{i,\q}(\t_\p))=H/(e_1\cdots e_{j-1})$. By (\ref{tautology}), we get the desired expression for $v_\p(\phi_{i,\q}(\t))$.

Suppose now $i>j>0$, $\phi_{j,\p}\ne\phi_{j,\q}$, or $i=j$, $\phi_{j,\q}\ne\phi(\p,\q)$.
Consider a new type $\tilde{\ty}_\p$, constructed as follows: if $\phi_{j,\p}=\phi(\p,\q)$, we take $\tilde{\ty}_\p=\ty_\p$, and if $\phi_{j,\p}\ne\phi(\p,\q)$, we take
\begin{multline*}
\tilde{\ty}_\p=(\phi_1;\lambda_1,\phi_2;\cdots,\phi_{j-1};\lambda_{j-1},\phi(\p,\q);\lambda^\q_\p,\phi_{j,\p};\\\lambda_{j,\p}-\lambda^\q_\p,\phi_{j+1,\p};\lambda_{j+1,\p},\cdots,\phi_{r_\p+1,\p};\lambda_{r_\p+1,\p},\psi_{r_\p+1,\p}).
\end{multline*}
By \cite[Cor.3.6]{GMNalgorithm}, $\tilde{\ty}_\p$ is a type, and it is also $f$-complete. If $\phi_{j,\p}\ne\phi(\p,\q)$ then $\tilde{\ty}_\p$ is not optimal because $\deg\phi(\p,\q)=\deg \phi_{j,\p}$, but optimality is not necessary to apply Proposition \ref{vgt}.

We consider an analogous construction for $\tilde{\ty}_\q$. The new types satisfy $i(\tilde{\ty}_\p,\tilde{\ty}_\q)=j$ and they have the same $j$-th $\phi$-polynomial; finally if $i=j$, the polynomial $\phi_{i,\q}$ is the $(j+1)$-th $\phi$-polynomial of $\tilde{\ty}_\q$.
Therefore, the computation of $v_\p(\phi_{i,\q})$ is deduced by the same arguments as above.
\end{proof}

We are ready to construct the family $\{b_\p\}_{\p\in\P}$. Consider the following equivalence relation in the set $\P$:
$$
\p\sim\q\sii \psi_{0,\p}=\psi_{0,\q},
$$and denote by $[\p]$ the class of any $\p\in\P$.

For each class $[\p]$, let $\phi_{1,[\p]}(x)\in\Z[x]$ be the first $\phi$-polynomial in any list $\op{Ref}_\q$  for some $\q\in[\p]$; we saw in the first remark following Definition \ref{gcphi} that all lists $\op{Ref}_\q$,  for $\q\in[\p]$, have the same initial $\phi$-polynomial. Now, for each $\q\in[\p]$, denote by $\lambda_{1,\q}^0$
the first slope in the list $\op{Ref}_\q$. In other words, for any $\q\in[\p]$, we have
$$\as{1.6}
(\phi_{1,[\p]},\lambda_{1,\q}^0)=
\left\{
\begin{array}{ll}
(\phi_{1,\q},\lambda_{1,\q}),&\mbox { if }\op{Refinements}_{1,\q}\mbox{ is empty},\\
\left(\phi_{1,\q}^{(1)},\lambda_{1,\q}^{(1)}\right),&\mbox { if }\op{Refinements}_{1,\q}\mbox{ is not empty}.
\end{array}
\right.
$$
By (\ref{v=0}) and  (\ref{thmpolygon}),
\begin{equation}\label{vqphip}\as{1.2}
v_\q(\phi_{1,[\p]}(\t))=\left\{
\begin{array}{ll}
0,&\mbox { if }\q\not\in[\p],\\
e(\q/p)|\lambda^0_{1,\q}|,&\mbox { if }\q\in[\p].
\end{array}
\right.
\end{equation}
Consider now, for each class $[\p]$:
$$
B_{[\p]}(x):=\prod_{[\q]\ne[\p]}\phi_{1,[\q]}(x).
$$

Fix a prime ideal $\p\in\P$. If $\#[\p]=1$, the element $b_\p=(B_{[\p]}(\t))^2$ satisfies (\ref{bp})
already. Suppose now $\#[\p]>1$. For all $\l\in[\p]$, $\l\ne\p$, let $\phi_{\l}=\phi_{r_{\l}+1,\l}$ be the Montes approximation to $f_{\l}(x)$ contained in $\ty_{\l}$; consider the least positive numerator and denominator of the rational number $v_\p(\phi_{\l}(\t))/e(\p/p)$ (which has been computed in Proposition \ref{vpq}):
$$\dfrac{v_\p(\phi_{\l}(\t))}{e(\p/p)}=\dfrac{n_{\l}}{d_{\l}},\quad \op{gcd}(n_{\l},d_{\l})=1.
$$
We look for an integral element of the form:
\begin{equation}\label{finalbp}
b_\p=\dfrac{(B_{[\p]}(\t))^m\prod_{\l\in[\p],\,\l\ne\p}\phi_{\l}(\t)^{d_{\l}}}{p^N},
\end{equation}
where the exponents $N,m$ are given by
$$N=\sum_{\l\in[\p],\,\l\ne\p}n_{\l},\qquad m=\left\lceil\max_{\q\in\P,\,\q\not\in[\p]}\left\{\dfrac{Ne(\q/p)+2}{e(\q/p)|\lambda^0_{1,\q}|}\right\}\right\rceil.$$

Take $\q\not\in[\p]$. By (\ref{v=0}) and (\ref{vqphip}), we have $$v_\q\left(\prod_{\l\in[\p],\,\l\ne\p}\phi_{\l}(\t)^{d_{\l}}\right)=0,\qquad
v_\q(B_{[\p]}(\t))=v_\q(\phi_{1,[\q]}(\t))=e(\q/p)|\lambda^0_{1,\q}|.$$
Hence, $v_\q(p^Nb_\p)=m\,e(\q/p)|\lambda^0_{1,\q}|\ge  Ne(\q/p)+2$,
so that $v_\q(b_\p)>1$, as desired.

For the prime $\p$ itself, we have $v_\p(B_{[\p]}(\t))=0$ and, by construction,
$$
v_\p(b_\p)=\left(\sum_{\l\in[\p],\,\l\ne\p}d_{\l}v_\p(\phi_{\l}(\t))\right)-Ne(\p/p)=0.
$$

Finally, for a prime $\l\in[\p]$, $\l\ne\p$, we have  $v_{\l}(B_{[\p]}(\t))=0$ and
$$
v_{\l}(b_\p)=V_1+V_2-Ne(\l/p),\quad V_1:=\sum_{\l'\in[\p],\,\l'\ne\p,\l}v_{\l}(\phi_{\l'}(\t)^{d_{\l'}}),\quad V_2:=v_{\l}(\phi_{\l}(\t)^{d_{\l}}).
$$
Lemma \ref{lessthanr} and Proposition \ref{vpq} show that $V_1$ (as all invariants we used so far) depends only on the numerical invariants of the types $\ty_{\l}, \ty_{\l'}$, of level $1\le i\le i(\ty_{\l}, \ty_{\l'})\le \min\{r_{\l},r_{\l'}\}$, and not on the quality of the Montes approximations $\phi_{\l'}$. On the other hand, $V_2$ depends on the choice of $\phi_{\l}$ as a Montes approximation of $f_{\l}(x)$; by (\ref{p=q}):
$$
V_2=v_{\l}(\phi_{\l}(\t)^{d_{\l}})=d_{\l}\,e(\l/p)\dfrac{v_{r_{\l}+1,\l}(\phi_{\l})+h_{r_{\l}+1,\l}}{e_{1,\l}\cdots e_{r_{\l},\l}}=d_{\l}(v_{r_{\l}+1,\l}(\phi_{\l})+h_{r_{\l}+1,\l}).
$$
Hence, for all $\l\in[\p]$, $\l\ne\p$, we improve the Montes approximation $\phi_{\l}$ till we get
$$
h_{r_{\l}+1,\l}\ge \dfrac{2+Ne(\l/p)-V_1}{d_\l}-v_{r_{\l}+1,\l}(\phi_{\l}).
$$This ensures that $v_{\l}(b_\p)>1$, as desired.

\subsection{Two-element representation of a fractional ideal}\label{subsectwogen}
Any fractional ideal $\a$ of $K$ admits a two-element representation: $\a=(\ell,\alpha)$, where $\alpha\in K$ and $\ell=\ell(\a)\in\Q$ is the least positive rational number contained in $\a$. It is straightforward to obtain such a representation from the two-element representation of the prime ideals obtained in the last section.
For the sake of completeness we briefly describe the routine. For each prime ideal $\p$ of $K$, we have computed an integral element $\alpha_\p\in\Z_K$ such that
$$
v_\p(\alpha_\p)=1,\quad v_\q(\alpha_\p)=0, \ \forall\,\q|p, \,\q\ne\p.
$$
These elements are of the form: $\alpha_\p=p^{\nu_\p}h_\p(\t)$, where $\nu_\p\in\Z$ and $h_\p(x)\in\Z[x]$ is a primitive polynomial. Generically, $\nu_\p\le0$, except for the special case $\p=p\Z_K$, where $\nu_\p=1$, $h_\p(x)=1$. Let us write
$$
\op{N}_{K/\Q}(h_\p(\t))=p^{\mu_\p}N_\p, \mbox{ with }p\nmid N_\p.
$$

Suppose first that $\a=\prod_{\p|p}\p^{a_\p}$ has support only in prime ideals dividing $p$. We take then:
$$
\alpha=\prod_{\p|p}\alpha_\p^{a_\p}\prod_{a_\p<0}N_\p^{|a_\p|},\qquad H:=\left\lceil\max_{\p|p}\left\{ \dfrac{a_\p}{e(\p/p)}\right\}\right\rceil.
$$
One checks easily that $\ell(\a)=p^H$ and $\a=(p^H,\alpha)$.

In the general case, we write $\a=\prod_{p\in\pp}\a_p$, where $\pp$ is a finite set of prime numbers and $\a_p$ is divided only by prime ideals lying over $p$. For each $\a_p$ we find a two-element representation $\a_p=(p^{H_p},\alpha_p)$; then, the two-element representation of $\a$ is:
$$
\a=\left(\prod_{p\in\pp}p^{H_p},\,\sum_{p\in\pp}\left(\prod_{q\in\pp,\,q\ne p}
q^{H_q+1}\right)\alpha_p\right).
$$
Note that the second generator $\alpha$ constructed in this way satisfies: $v_\p(\alpha)=v_\p(\a)$, for all $\p$ with $v_\p(\a)\ne 0$, which is slightly stronger than the condition $\a=(\ell,\alpha)$.

\section{Residue classes and Chinese remainder theorem}\label{secCRT}
In this section we show how to compute residue classes modulo prime ideals, and we design a chinese remainder theorem routine. As in the previous sections, this will be done without constructing (a basis of) the maximal order of $K$ and without the necessity to invert elements in the number field. Only some inversions in the finite residue fields are required.

For any prime ideal $\p$ of $K$ we keep the notations for $\ty_\p$, $f_\p(x)$, $\phi_\p(x)$, $\ff{\p}$, $\t_\p$, $K_\p$, $\Z_{K_\p}$, as introduced in section \ref{secPadic}.

\subsection{Residue classes modulo a prime ideal}
\label{redmap}
Let $\p$ be a prime ideal of $K$, corresponding to an $f$-complete type $\ty_\p$ with an added $(r+1)$-th level, as indicated in section \ref{subsecApprox}.

The finite field $\ff{\p}:=\ff{r+1}$ may be considered as a computational representation of the residue field $\Z_K/\p$. In fact, fix the topological embedding, $\iota_\p\colon K\hookrightarrow K_\p$, determined by sending $\t$ to $\t_\p$, and  consider the reduction modulo $\p$ map obtained by composition of the embedding
$\Z_K\hookrightarrow \Z_{K_\p}$ with the local reduction map constructed in (\ref{lred}):
$$
\op{red}_\p\colon \Z_K\hookrightarrow \Z_{K_\p}\stackrel{\op{lred}_\p}\lra \ff{\p}.
$$
The commutative diagram:
$$
\begin{array}{ccccc}
\Z_K&\hookrightarrow&\Z_{K_\p}&\stackrel{\op{lred}_\p}\lra&\ff{\p}\\
\downarrow&&\downarrow&&\parallel\\
\Z_K/\p&\iso &\Z_{K_\p}/\p\Z_{K_\p}&\stackrel{\gamma^{-1}}\iso&\ff{\p}
\end{array}
$$
shows that our reduction map $\op{red}_\p$ coincides with the canonical reduction map, $\Z_k\lra \Z_K/\p$, up to certain isomorphism $\Z_K/\p\iso \ff{\p}$.

The problem has now a computational perspective; we want to find a routine that computes $\op{red}_\p(\alpha)\in \ff{\p}$ for any given integral element $\alpha\in\Z_K$. To this end, it is sufficient to have a routine that computes $\op{lred}_\p(\alpha)\in\ff{\p}$, for any $\p$-integral $\alpha\in K$. Let us show that this latter routine may be based on item 2 of Proposition \ref{vgt} and Lemma \ref{prodgammas}.

Any $\alpha\in\Z_K$ can be written in a unique way as:
$$
\alpha=\dfrac ab\,\dfrac{g(\t)}{p^N},
$$
where $a,b$ are positive coprime integers not divisible by $p$ and $g(x)\in\Z[x]$ is a primitive polynomial. Clearly, $$\op{red}_\p(\alpha)=\op{lred}_\p(\iota_\p(\alpha))=\op{lred}_\p(a/b)\op{lred}_\p(g(\t_\p)/p^N),$$ and $\op{lred}_\p(a/b)\in\ff{\p}$ is the element in the prime field determined by the quotient of the classes modulo $p$ of $a$ and $b$. Thus, we need only to compute $\op{lred}_\p(g(\t_\p)/p^N)$.

If $N=0$, then $\op{lred}_\p(g(\t_\p))=\op{red}_\p(g(\t))$ is just the class of $g(x)$ modulo the ideal $(p,\phi_{1,\p}(x))$. In other words, if $\overline{g}(x)\in\ff{0}[x]$ is the polynomial obtained by reduction of the coefficients of $g(x)$ modulo $p$, then $\op{red}_\p(g(\t))=\overline{g}(z_0)\in\ff{1,\p}\subseteq \ff{\p}$.

If $N>0$, we look for the first index, $1\le i\le r+1$, for which the truncation $\op{Trunc}_i(\ty)$ does not divide $g(x)$. In the paragraph following Proposition \ref{vgt} we showed that this will always occur, eventually (for $i=r+1$) after improving the Montes approxi\-mation $\phi_\p=\phi_{r+1}$. By Proposition \ref{vgt}, there is a computable point $(s,u)\in N_i(g)$ such that
$v(g(\t_\p))=(sh_i+ue_i)/(e_1\cdots e_i)=v(\Phi_i(\t_\p)^s\pi_i(\t_\p)^u)$, and
$$
\op{lred}_\p\left(\dfrac{g(\t_\p)}{\Phi_i(\t_\p)^s\pi_i(\t_\p)^u}\right)=R_i(g)(z_i)\ne0.
$$
Now, if $(sh_i+ue_i)/(e_1\cdots e_i)>N$, we have $\op{lred}_\p(g(\t_\p)/p^N)=0$; on the other hand, if $(sh_i+ue_i)/(e_1\cdots e_i)=N$, we have
\begin{equation}\label{locred}
\op{lred}_\p\left(\dfrac{g(\t_\p)}{p^N}\right)= R_i(g)(z_i)\cdot \op{lred}_\p\left(\dfrac{\Phi_i(\t_\p)^s\pi_i(\t_\p)^u}{p^N}\right)
= R_i(g)(z_i)\, z_1^{t_1}\cdots z_i^{t_i},
\end{equation}
where $p^{-N}\Phi_i(\t_\p)^s\pi_i(\t_\p)^u=\gamma_1^{t_1}\cdots \gamma_i^{t_i}$, and the vector $(t_1,\dots,t_i)\in\Z^i$ can be found by the procedure of Lemma \ref{prodgammas}, applied to the input vector $$\log \left(p^{-N}\Phi_i(x)^s\pi_i(x)^u\right)=
(-N,0,\dots,0)+s\log \Phi_i+u\log \pi_i.$$
Since $\log \Phi_i$, $\log \pi_i$ have been stored as secondary invariants of $\ty_\p$,
we get in this way a really fast computation of $\op{lred}_\p(p^{-N}g(\t_\p))$.

This ends the computation of the reduction modulo $\p$ map $\op{red}_\p$.

\subsection{Chinese remainder theorem}
It is straightforward to design a chinese remainders routine once the following problem is solved.\medskip

\noindent{\bf Problem. }{\it Let $p$ be a prime number, $\P$ the set of prime ideals of $K$ lying above $p$, and $(a_\p)_{\p\in\P}$ a family of non-negative integers. Find a family $(c_\p)_{\p\in\P}$ of integral elements $c_\p\in\Z_K$ such that,
$$
c_\p\equiv 1 \md{\p^{a_\p}},\quad c_\p\equiv 0\md{\q^{a_\q}},\ \forall\,\q\in\P,\ \q\ne\p,
$$
for all $\p\in\P$.}\medskip

There is an easy solution to this problem: take the element $b_\p\in\Z_K$ satis\-fying (\ref{bp}), constructed in section \ref{subsectgenerators}, and consider $c_\p=(b_\p)^{p^t(q_\p-1)}$, where
$q_\p=\op{N}_{K/\Q}(\p)=\#\ff{\p}$, and $t$ is sufficiently large. However, the element $c_\p$ constructed in this way is not useful for practical purposes because it may have a huge norm and a huge height (very large numerators or denominators of the coefficients of its standard representation as a polynomial in $\t$), if $q_\p$ or $t$ are large. Instead, we shall refine the construction of the element $b_\p$ to get a small size solution $c_\p$ to the above problem.

First we deal with the particular case $a_\p=1$. The idea is to get an element $b_\p\in\Z_K$ satisfying
$$
v_\p(b_\p)=0,\quad v_\q(b_\p)\ge a_\q,\ \forall\,\q\in\P,\ \q\ne\p,
$$
and then find $\beta\in K$ such that $v_\p(\beta)=0$, $c_\p:=b_\p \beta$ is integral, and $c_\p\equiv 1\md{\p}$. Since we know the two-element representation $\q=(p,\alpha_\q)$ of the prime ideals, we could take $b_\p=\prod_{\q\in\P,\,\q\ne\p}(\alpha_\q)^{a_\q}$. Again, this might lead to a $b_\p$ with large size, so that a direct construction of $b_\p$ is preferable in order to keep its size as small as possible.

Thus, we consider an element $b_\p\in\Z_K$ as in (\ref{finalbp}):
$$
b_\p=\dfrac{(B_{[\p]}(x))^m\prod_{\l\in[\p],\,\l\ne\p}\phi_{\l}(x)^{d_{\l}}}{p^N},
$$
with $N=\sum_{\l\in[\p],\ \l\ne\p} d_\l v(\phi_\l(\t_\p))=\sum_{\l\in[\p],\ \l\ne\p} n_\l$. By construction, $v_\p(b_\p)=0$.

Let $i=\max_{\q\in\P, \,\q\ne\p}\{i(\ty_\p,\ty_\q)\}$; Lemma \ref{lessthanr} shows that $i\le r_\p$. From now on, we use only invariants of the type $\ty_\p$ and we drop the subindex $\p$ in the notation. Let
$$
M=\left\{\begin{array}{ll}
0,& \mbox{ if }i=0,\\
\left\lceil \dfrac{v_{i+1}(\phi_{i+1}))}{e_1\cdots e_i}\right\rceil,&\mbox{ if }i>0.
\end{array}
\right.
$$

Arguing as in section \ref{subsectgenerators}, we can take $m$ sufficiently large, and each $\phi_\l(x)$ sufficiently close to the $p$-adic irreducible factor $f_l(x)$, so that
\begin{equation}\label{plusM}
v_\q(b_\p)\ge a_\q+Me(\q/p), \ \forall \,\q\in\P,\ \q\ne\p,
\end{equation}
while keeping the denominator $p^N$ and the condition $v_\p(b_\p)=0$.
In particular, $b_\p$ belongs to $\Z_K$. The idea is to multiply $b_\p$ by some element in $K$ that conveniently modifies its residue class modulo $\p$.
We split this task into two parts, that may be considered as a kind of respective inversion modulo $\p$ of $(B_{[\p]}(\t))^m$ and $\,p^{-N}\prod_{\l\in[\p],\,\l\ne\p}\phi_{\l}(\t)^{d_{\l}}$.

Let $h(x)=(B_{[\p]}(x))^m$. Then, $\zeta:=\op{red}_\p(h(\t))\in\ff{1}\subseteq\ff{\p}$
is just the class of $h(x)$ modulo the ideal $(p,\phi_1(x))$. We invert $\zeta$ in $\ff{1}$ and represent the inverse $\zeta^{-1}=\overline{P}(z_0)$, as a polynomial in $z_0$ of degree less than $f_0$, with coefficients in the prime field $\ff{0}$. Take $\beta_0=P(\t)$, where  $P(x)\in\Z[x]$ is an arbitrary lift of $\overline{P}(x)$; clearly, $h(\t)\beta_0\equiv 1\md{\p}$.

Let now $\,g(x)=p^{-N}\prod_{\l\in[\p],\,\l\ne\p}\phi_{\l}(x)^{d_{\l}}$.
If $[\p]=\{\p\}$, then $i=0$, $N=M=0$, $g(x)=1$ and we are done. Suppose $[\p]\varsupsetneq \{\p\}$, so that $1\le i\le r_\p$. By the definition of the index of coincidence, we have  $\op{Trunc}_i(\ty_\p)\nmid \phi_\l(x)$, for all $\l\ne\p$; by the Theorem of the product \cite[Thm.2.26]{HN}, $\op{Trunc}_i(\ty_\p)\nmid g(x)$. Therefore, as we saw in the last section,
$$
\xi:=\op{lred}_\p(p^{-N}g(\t_\p))=R_i(g)(z_i)\,z_1^{t_1}\cdots z_i^{t_i}\in\ff{i+1},
$$
for some easily computable sequence of integers $(t_1,\dots,t_i)$. Let $V=e_1\cdots e_iM$; by \cite[Cor.3.2]{HN}, $v\left(\pi_{i+1}(\t_\p)^V\right)=M$. Compute a vector $(t'_1,\dots,t'_i)\in\Z^i$ such that
$p^{-M}\pi_{i+1}(x)^V=\gamma_1^{t'_1}\cdots \gamma_i^{t'_i}$, as indicated in Lemma \ref{prodgammas}, and take
$$
\xi':=z_1^{t'_1}\cdots z_i^{t'_i}\in\ff{i+1}.
$$
Let $\varphi(y)\in\ff{i}[y]$ be the unique polynomial of degree less than $f_i$, such that
$\varphi(z_i)=z_i^{\ell_iV/e_i}(\xi\xi')^{-1}$. Let $\nu=\ord_y\varphi(y)$.
Clearly,
$$V\ge v_{i+1}(\phi_{i+1})=e_if_iv_{i+1}(\phi_i),
$$the last equality by \cite[Thm.2.11]{HN}. Therefore, we can apply the constructive method described in \cite[Prop.2.10]{HN} to compute a polynomial $P(x)\in\Z[x]$ sa\-tisfying the following properties:
$$\dg P(x)<m_{i+1},\qquad v_{i+1}(P)=V, \qquad y^{\nu}R_i(P)(y)=\varphi(y).
$$
A look at the proof of \cite[Prop.2.10]{HN} shows that $N_i(P)$ is one sided of slope $\lambda_i$ and its end points have abscissa $e_i\nu$  and $e_i\deg\varphi$ (cf. Figure 4).

\begin{center}
\setlength{\unitlength}{5.mm}
\begin{picture}(10,5.4)
\put(5.85,1.85){$\bullet$}\put(2.85,3.35){$\bullet$}
\put(0,0){\line(1,0){10}}
\put(9,.5){\line(-2,1){8}}
\put(3,3.53){\line(2,-1){3}}
\put(.9,4.5){\line(1,0){.2}}
\put(1,-1){\line(0,1){6.5}}
\put(4.5,3){\begin{footnotesize}$N_i(P)$\end{footnotesize}}
\put(8.5,1){\begin{footnotesize}$L_{\lambda_i}$\end{footnotesize}}
\put(-.5,4.3){\begin{footnotesize}$V/e_i$\end{footnotesize}}
\multiput(3,-.1)(0,.25){15}{\vrule height2pt}
\multiput(6,-.1)(0,.25){9}{\vrule height2pt}
\put(5.2,-.6){\begin{footnotesize}$e_i\deg\varphi $\end{footnotesize}}
\put(2.6,-.6){\begin{footnotesize}$e_i\nu $\end{footnotesize}}
\end{picture}
\end{center}\medskip
\begin{center}
Figure 4
\end{center}

\noindent{\bf Claim: }$\op{lred}_\p(p^{-M}P(\t_\p))=\xi^{-1}$.\medskip

In fact, since $\dg P(x)<m_{i+1}$ and $v_{i+1}(P)=V$, the Newton polygon $N_{i+1}(P)$ is the single point $(0,V)$; in particular, $\op{Trunc}_{i+1}(\ty_\p)$ does not divide $P(x)$ and Proposition \ref{vgt}
shows that $v(P(\t_\p))=M$ and
$$
\op{lred}_\p\left(P(\t_\p)/\pi_{i+1}(\t_\p)^V\right)=R_{i+1}(P)(z_{i+1})\ne 0.
$$
Actually, $R_{i+1}(P)(y)$ has degree $0$ and it represents a constant in $\ff{i+1}$ that we denote simply by $R_{i+1}(P)$. By (\ref{locred}),
$$
\op{lred}_\p\left(p^{-M}P(\t_\p)\right)=R_{i+1}(P)\,\op{lred}_\p(p^{-M}\pi_{i+1}(\t_\p)^V)=R_{i+1}(P)\,\xi'.
$$
By the very definition of the residual polynomial \cite[Defs.2.20+2.21]{HN}, we have
$$
R_{i+1}(P)=z_i^{(e_i\nu -\ell_iV)/e_i}R_i(P)(z_i)=z_i^{-\ell_iV/e_i}\varphi(z_i)=(\xi\xi')^{-1},
$$
and the Claim is proven.\medskip

Finally, consider $$c_\p=b_\p\beta_0(p^{-M}P(\t)).$$ The condition (\ref{plusM}) ensures that $c_\p$ belongs to $\Z_K$ (although $p^{-M}P(\t)$ might not be integral) and satisfies $v_\q(c_\p)\ge a_\q$, for all $\q\in\P$, $\q\ne\p$. By construction, $\op{red}_\p(c_\p)=\op{lred}_\p(\iota_\p(c_\p))=1$.

It remains to solve our Problem when $a_\p>1$. In this case, we find $c\in\Z_K$ such that
$$
c\equiv 1\md{\p},\qquad c\equiv 0 \md{\q^{a_\q}},\ \forall\,\q\in\P,\ \q\ne\p,
$$
and we take $c_\p=(c-1)^m+1$, where $m$ is the least odd integer that is greater than or equal to $a_\p/v_\p(c-1)$.

\section{$p$-integral bases}\label{secBasis}
Let $p$ be a prime number and let $\ff{p}$ be the prime field of characteristic $p$. A \emph{$p$-integral basis of $K$} is a family of $n$ $\Z$-linearly independent integral elements $\alpha_1,\dots,\alpha_n\in\Z_K$ such that
$$
p\nmid \left(\Z_K\colon \gen{\alpha_1,\dots,\alpha_n}_\Z\right),
$$
or equivalently, such that the family $\alpha_1\otimes1,\,\dots,\,\alpha_n\otimes1$ is $\ff{p}$-linearly independent in the $\ff{p}$-algebra $\Z_K\otimes_\Z\ff{p}$.

If the discriminant $\dsc(f)$ of $f(x)$ may be factorized, the computation of an integral basis of $K$ (a $\Z$-basis of $\Z_K$) is based on the computation of $p$-integral bases for the different primes $p$ that divide  $\dsc(f)$.

Anyhow, even when $\dsc(f)$ may not be factorized, the computation of a $p$-integral basis of $K$ for a given prime $p$ is an interesting task on its own. In this section we show how to carry out this task from the data captured by the Okutsu-Montes representations of the prime ideals $\p$ lying over $p$.
For any such prime ideal we keep the notations for $f_\p(x)$, $\phi_\p(x)$, $\ff{\p}$, $\t_\p$, $K_\p$, $\Z_{K_\p}$, as introduced in section \ref{secPadic}.

\subsection{Local exponent of a prime ideal}Let $\P$ be the set of prime ideals lying over $p$. For any $\p\in\P$ we fix the topological embedding $K\hookrightarrow K_\p$ determined by sending $\t$ to $\t_\p$.

 \begin{definition}
We define the local exponent of $\p\in\P$ to be the least positive integer $\exp(\p)$ such that
$$
p^{\exp(\p)}\Z_{K_\p}\subseteq \Z_p[\t_\p].
$$
Note that $\exp(\p)$ is an invariant of the irreducible polynomial $f_\p(x)\in\Z_p[x]$, but it is not an intrinsic invariant of $\p$.
\end{definition}

The computation of $\exp(\p)$ is easily derived from the results of \cite{GMNokutsu}.

Let $\p=[p;\phi_1,\dots,\phi_r,\phi_\p]$ be the Okutsu-Montes representation of $\p$, as indicated in section \ref{subsecApprox}. By \cite[Lem.4.5]{GMNokutsu}:
$$f_\p(x)\equiv \phi_\p(x)\md{\m^{\lceil\nu\rceil}}, \quad \mbox{ where }\nu=\nu_\p+(h_{r+1}/e(\p/p)),
$$
and $\nu_\p$ is the rational number
$$\nu_\p:=\dfrac{h_1}{e_1}+\dfrac{h_2}{e_1e_2}+\cdots+\dfrac{h_r}{e_1\cdots e_r}.
$$
Caution: this number is not always an invariant of $f_\p(x)$. If $e_rf_r=1$, the Okutsu depth of $f_\p(x)$ is $R=r-1$ (cf. (\ref{depth})), and $h_r$ depends on the choice of $\phi_r$.

Denote $\phi_0(x)=x$, $m_0=1$, $m_{r+1}=n_\p:=e(\p/p)f(\p/p)$. Any integer $0\le m<n_\p$ can be written in a unique way as:
$$
m=\sum_{i=0}^ra_im_i, \quad 0\le a_i<\dfrac{m_{i+1}}{m_i}.
$$
Then, $g_m(x):=\prod_{i=0}^r\phi_i(x)^{a_i}$ is a divisor polynomial of degree $m$ of $f_\p(x)$ \cite[Thm.2.15]{GMNokutsu}. Also, if $\nu_m:=\lfloor v(g_m(\t))\rfloor $, the family
\begin{equation}\label{basis}
1,\,\dfrac{g_1(\t_\p)}{p^{\nu_1}},\,\dots,\,\dfrac{g_{n_\p-1}(\t_\p)}{p^{\nu_{n_\p-1}}}
\end{equation}
is a $\Z_p$-basis of $\Z_{K_\p}$ \cite[I,Thm.1]{Ok}. Since the numerators $g_m(x)$ have strictly increasing degree and $\nu_1\le \cdots\le\nu_{n_\p-1}$, it is clear that
$\exp(\p)=\nu_{n_\p-1}$.

On the other hand, since $m_{i+1}/m_i=e_if_i$, we have
$$\nu_{n_\p-1}=\left\lfloor \sum_{i=1}^r(e_if_i-1)v(\phi_i(\t_\p))
\right\rfloor
$$
Now, along the proof of \cite[Lem.4.5]{GMNokutsu}, it was proven that:
\begin{equation}\label{above}
\sum_{i=1}^r(e_if_i-1)v(\phi_i(\t_\p))=v(\phi_{r+1}(\t_\p))-\nu_\p-\dfrac{h_{r+1}}{e(\p/p)}=\dfrac{v_{r+1}(\phi_{r+1})}{e(\p/p)}-\nu_\p.
\end{equation}
Thus, if we combine
(\ref{above}) with the explicit formula:
$$
v_{r+1}(\phi_{r+1})=\sum_{i=1}^re_{i+1}\cdots e_r(e_if_i\cdots e_rf_r)h_i,
$$
given in \cite[Prop.2.15]{HN}, we get the following explicit computation of $\exp(\p)$ in terms of the invariants of the Okutsu-Montes representation of $\p$.

\begin{theorem}\label{theoexp}For all $\p\in\P$, we have $\exp(\p)=\lfloor\mu_\p\rfloor$, where
$$
\mu_\p:=\dfrac{v_{r+1}(\phi_{r+1})}{e(\p/p)}-\nu_\p=\sum_{i=1}^r(e_if_i\cdots e_rf_r-1)\dfrac{h_i}{e_1\cdots e_i}.
$$
\end{theorem}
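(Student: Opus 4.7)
The plan is to assemble the ingredients already laid out in the paragraphs preceding the statement. The key identification is that $\exp(\p)$ coincides with the top valuation $\nu_{n_\p-1}$ appearing in the Okutsu basis (\ref{basis}); once this is established, everything else is a routine recombination of formulas already proved in \cite{HN} and \cite{GMNokutsu}.

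First I would argue that $\exp(\p)=\nu_{n_\p-1}$. Since each $g_m$ is monic of degree $m$, the family $\{g_m(\t_\p)\}_{m=0}^{n_\p-1}$ is triangular (with unit diagonal) with respect to the power basis $\{\t_\p^m\}$ of $\Z_p[\t_\p]$, and hence is itself a $\Z_p$-basis of $\Z_p[\t_\p]$. Coupling this with the fact that $\{p^{-\nu_m}g_m(\t_\p)\}$ is a $\Z_p$-basis of $\Z_{K_\p}$, one obtains an isomorphism of $\Z_p$-modules $\Z_{K_\p}/\Z_p[\t_\p]\cong\bigoplus_{m=0}^{n_\p-1}\Z_p/p^{\nu_m}\Z_p$. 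The smallest $k\ge 0$ annihilating this quotient is $\max_m\nu_m$, and by the monotonicity $\nu_1\le\cdots\le\nu_{n_\p-1}$ noted in the text, this maximum equals $\nu_{n_\p-1}$. Thus $\exp(\p)=\nu_{n_\p-1}=\lfloor v(g_{n_\p-1}(\t_\p))\rfloor$.

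Next I would identify $v(g_{n_\p-1}(\t_\p))$ with $\mu_\p$. A greedy, telescoping expansion of $n_\p-1=m_{r+1}-1$ in the mixed-radix system $(m_0,\dots,m_r)$, using $m_{i+1}=e_if_im_i$ (with the convention $e_0=1$), yields digits $a_i=e_if_i-1$; hence $g_{n_\p-1}(x)=\prod_{i=0}^r\phi_i(x)^{e_if_i-1}$. The $\phi_0$-contribution vanishes (since $v(\t_\p)=0$ in the situation to which the formula applies), so $v(g_{n_\p-1}(\t_\p))=\sum_{i=1}^r(e_if_i-1)v(\phi_i(\t_\p))$. Equation (\ref{above}), borrowed from \cite{GMNokutsu}, immediately rewrites this as $v_{r+1}(\phi_{r+1})/e(\p/p)-\nu_\p=\mu_\p$, establishing the first equality $\exp(\p)=\lfloor\mu_\p\rfloor$.

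Finally, to obtain the closed-form expression, I would substitute the formula $v_{r+1}(\phi_{r+1})=\sum_{i=1}^re_{i+1}\cdots e_r(e_if_i\cdots e_rf_r)h_i$ of \cite[Prop.2.15]{HN} into $\mu_\p=v_{r+1}(\phi_{r+1})/e(\p/p)-\nu_\p$, divide through by $e(\p/p)=e_1\cdots e_r$, and subtract $\nu_\p=\sum_{i=1}^r h_i/(e_1\cdots e_i)$ term by term; the summands collapse into $(e_if_i\cdots e_rf_r-1)h_i/(e_1\cdots e_i)$. The main obstacle is the first step, i.e., justifying $\exp(\p)=\nu_{n_\p-1}$ via the cokernel $\Z_{K_\p}/\Z_p[\t_\p]$; the remaining steps are bookkeeping with the combinatorial identities already proved in the Okutsu--Montes machinery.
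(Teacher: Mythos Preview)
Your argument is correct and follows the same route as the paper: the paper's proof is precisely the text preceding the theorem statement, which identifies $\exp(\p)=\nu_{n_\p-1}$, expands $g_{n_\p-1}$ via the mixed-radix digits $a_i=m_{i+1}/m_i-1$, invokes (\ref{above}), and then substitutes the closed formula for $v_{r+1}(\phi_{r+1})$ from \cite[Prop.~2.15]{HN}. Your cokernel argument for $\exp(\p)=\nu_{n_\p-1}$ simply makes explicit what the paper calls ``clear''.

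One small correction: your parenthetical ``since $v(\t_\p)=0$'' is not true in general. If $\psi_0(y)=y$ then $v(\t_\p)>0$. The reason the $\phi_0$-contribution still vanishes is that in that case $f_0=\deg\psi_0=1$, so the digit $a_0=m_1/m_0-1=f_0-1=0$ and $\phi_0$ does not occur in $g_{n_\p-1}$ at all; conversely, when $f_0>1$ one indeed has $\psi_0\nmid y$ and hence $v(\t_\p)=0$ by (\ref{v=0}). Either way $(f_0-1)v(\t_\p)=0$, which is what you need. The paper silently drops the $i=0$ term for the same reason.
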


Note that $\mu_\p$ is an Okutsu invariant, because it depends only on $e_i,f_i,h_i$, for $1\le i\le R$, where $R$ is the Okutsu depth of $f_\p(x)$. If $R=r-1$, then $e_rf_r=1$ and the summand of $\mu_\p$ corresponding to $i=r$ vanishes.

\subsection{Computation of a $p$-integral basis}
Along the computation of the prime ideal decomposition of the ideal $p\Z_K$ (by a single call to Montes algorithm) we can easily store the local exponents $\exp(\p)$, and the numerators $g_m(x)$ and denominators $p^{\nu_m}$ of all $\Z_p$-bases of $\Z_{K_\p}$ given in (\ref{basis}), for all $\p\in\P$.

It is well-known how to derive a $p$-integral basis from all these local bases. Let us briefly describe a concrete procedure to do this, taken from  \cite{ore2}.

We apply the method described at the end of section \ref{subsectgenerators} to compute multipliers $\{b_\p\}_{\p\in\P}$ satisfying:
\begin{equation}\label{multipliers}
v_\p(b_\p)=0,\quad v_\q(b_\p)\ge (\exp(\p)+1)\,e(\q/p),\ \forall\q\in\P,\,\q\ne\p.
\end{equation}
Consider the family obtained by multiplying each local basis (\ref{basis}) by its corresponding multiplier:
$$
\bb_\p:=\left[b_\p,\,b_\p\,\dfrac{g_1(\t)}{p^{\nu_1}},\,\dots,\,b_\p\,\dfrac{g_{n_\p-1}(\t)}{p^{\nu_{n_\p-1}}}\right].
$$
Then, $\bb:=\bigcup_{\p\in\P}\bb_\p$ is a $p$-integral basis of $K$.

In fact, although the elements $g_m(\t)/p^{\nu_m}$ are not (globally) integral, (\ref{multipliers}) shows that the products $\alpha_{m,\p}:=b_\p\,(g_m(\t)/p^{\nu_m})$ belong all to $\Z_K$ and satisfy
$$
v_\q(\alpha_{m,\p})\ge e(\q/p),\quad \forall \q\in\P,\, \q\ne\p.
$$
It is easy to deduce from this fact that the family of all $\alpha_{m,\p}$ determines an $\ff{p}$-linearly independent family of the algebra $\Z_K\otimes_\Z\ff{p}$.

\section{Some examples}\label{secKOM}
We have implemented  the algorithms described above in a package for Magma. The arithmetic of number fields in any algebraic manipulator has to face two problems: the factorization of the discriminant and the memory requirements for large degrees. Our package allows the user to skip the first problem, while it uses very little memory,  expanding Magma's capabilities by far. The package
which can be downloaded from its web page ({\tt http:/ma4-upc.edu/$\sim$guardia/+Ideals.html}),
is described in detail in the accompanying paper \cite{GMNPackage}. We include here a few examples  which exhibit the power of the package in different situations. More exhaustive tests of Montes algorithm have been presented in
\cite{GMNalgorithm}, \cite{GMNbasis}.

The computations in these examples have been done with Magma v2.15-11  in a Linux server, with two Intel Quad Core processors, running at 3.0 Ghz, with 32Gb of RAM memory.

\subsection{Large degree}   Consider the number field $K=\Q(\theta)$ given by a root  $\theta\in\overline\Q$ of the polynomial $f(x)=x^{1000} + 2^{50} x^{50} + 2^{60}$. The factorization of the discriminant of $f$ is
$$
\mbox{Disc}(f)=
2^{53940}3^{50}5^{2000}127^{50} 313^{50} 743^{50} 4886229527^{50}p^{50},
$$
with $p=337572698551220494882323528404563236947916489629537$.
The large degree of $f$ makes impossible to work in this number field using the standard functions of Magma, even after factorizing the discriminant, since the computation of the integral basis is necessary for these functions. But our algorithms avoid this computation, so that we can work with ideals in $K$. For instance,  in the table below we show the local index of the primes dividing $\operatorname{Disc}(f)$ and  the time taken to decompose them in $K$.
$$
\begin{array}{|c|r|r|}
\hline
\rm{Ideal} &\rm{Index} &\rm{Time}\\
\hline\hline
2\Z_K & 26235 &0.36s \\
\hline
3\Z_K & 0& 0.61s\\
\hline
5\Z_K &20& 0.63s\\
\hline
127\Z_K & 0& 1.29s\\
\hline
313\Z_K &0& 3.69s\\
\hline
743\Z_K &0& 6.47s \\
\hline
4886229527\Z_K&0 & 6.96s \\
\hline
p\Z_K &0& 60s\\
\hline
\end{array}
$$
Thus, we need less than 90 seconds to see that the discriminant of $K$ is
$$
\mbox{Disc}(K)=
2^{1470}3^{50}5^{1960}127^{50} 313^{50} 743^{50} 4886229527^{50}p^{50}.
$$
The running times in the table show clearly that the cost of the factorizations increases mainly because of the size of the numbers involved, and that the index has not a serious impact on them. The largest type appearing in these computations has order 3, and it appears along the factorization of the ideal $2\Z_K$, which is
$$
2\Z_K=\p_1^{10}({\p_1'})^{38}\p_4^{10}({\p_4'})^{38}\p_{20}^{38},
$$
where $\p_f^e$ stands for a prime ideal with residual degree $f$ and ramification index $e$. While we cannot expect to factor the ideals $I=(\t^3+50)\Z_K,$ $J=(\t+10)\Z_K$ in a reasonable time, it takes 0.03 seconds to compute the factorization of its sum:
$$
I+J=\p_1^{2}({\p_1'})^{2}\p_4^{2}({\p_4'})^{2}\p_{20}^{2}.
$$
The decomposition of 5 in the maximal order $\Z_K$ is
$$
5\Z_K=\p_2^5({\p'}_2)^5\p_{2}^{20}({\p'}_2)^{20}\p_2^{25}\p_4^{25}\p_{15}^{25}({\p_{15}'})^{25}.
$$
With the residue map computation explained in subsection \ref{redmap}, we may check very quickly that
$$
\theta\equiv \zeta\left(\operatorname{mod}{\p_4}\right),
$$
where $\Z_K/\p_4\simeq \mathbb{F}_5[\zeta]$, with $\zeta^4+2\zeta^2+3=0$. The Chinese remainder algorithm works also very fast in this number field.

\subsection{Small degree, large coefficients}

The space of modular forms of level 1 and degree 76 has dimension 6. The newforms in this space are defined over the number field $K=\Q(\theta)$, where $\theta\in\overline{\Q}$ is a root of the polynomial:
$$
\begin{array}{l}
f(x)=x^6 + 57080822040x^5 - 198007918566571424544768x^4 \\
\qquad- 11405115067164354385292006554337280x^3 \\
\qquad   + 9757628454131691442128845013041495838774263808x^2  \\
\qquad    +290013995562379500498435975003716024800114593761580810240x\\
   - 92217203874207784163935379997152082331434364841943058919508374716416.
\end{array}
$$
The discriminant of $f(x)$ is
$$
\begin{array}{rl}
\operatorname{Disc}(f)=&
2^{264} 3^{72} 5^{16} 7^{16} 11^2 13^2 17^4 19^2 43^2 59\cdot 193^2\cdot \\
&\qquad\qquad 293\cdot391987^2 4759427^2 137679681521^2M,
\end{array}
$$
where $M$ is a composite integer of 135 decimal figures which we have not been able to factorize. J. Rasmussen asked us (\cite{Rasmussen}) for a test to check certain divisibility conditions on the ring of integers of $K$, related to his work on congruences satisfied by the coefficients of certain modular forms. The time to find the decomposition of the primes in the set 
$$
S:=\{
2,3,5,7,11,13,17,19,43,59,193,293,391987,4759427,137679681521
\} 
$$
is almost negligible, since it involves only types of order at most 1. The table below shows the local indices of these primes:
$$
\begin{array}{|c|r|}
\hline
\rm{Ideal} &\rm{Index} \\
\hline
\hline
2 \Z_K &   132 \\  \hline
3 \Z_K &   36 \\  \hline
5 \Z_K &   8 \\  \hline
7 \Z_K &   8 \\  \hline
11 \Z_K &   1 \\  \hline
13 \Z_K &   1 \\  \hline
17 \Z_K &   2 \\  \hline
19 \Z_K &   1 \\  \hline
43 \Z_K &   1 \\  \hline
59 \Z_K &   0 \\  \hline
193 \Z_K &   1 \\  \hline
293 \Z_K &   0 \\  \hline
391987 \Z_K &   1 \\  \hline
4759427 \Z_K &   1 \\  \hline
137679681521 \Z_K &   1 \\  \hline
\end{array}
$$
Hence the discriminant of $K$ is $\operatorname{Disc}(K)=59\cdot 293N$, where $N$ is divisible by at least one of the prime factors of $M$, since $M$ is not a square.
The ideal prime decomposition of 3 in $\Z_K$ is
$$
3\Z_K=\p_2\p_1\p_1'\p_1''\p_1'''.
$$
The algorithm explained in section \ref{secGenerators} provides generators for all these ideals:
$$
\begin{array}{l}
\p_2=3\Z_K\!+3^{-12}(4\theta^5\! + 4311\theta^4\! + 1717038\theta^3\! + 2900691\theta^2 \!+ 820125\theta\!+ 2834352)\Z_K \\
\p_1=3\Z_K\!+3^{-11}(2\theta^5 + 1815\theta^4 + 586980\theta^3 + 732159\theta^2 + 658287\theta + 1535274)\Z_K\\
\p_1'=3\Z_K\!+3^{-11}(2\theta^5\! + 2031\theta^4 \!+ 662796\theta^3\! + 1123632\theta^2\! + 1071630\theta + 295245)\Z_K\\
\p_1''=3\Z_K\!+3^{-11}(2\theta^5 + 2307\theta^4 + 910872\theta^3 + 847584\theta^2 + 398034\theta + 1121931\Z_K\\
\p_1'''=3\Z_K\!+3^{-11}(2\theta^5 \!+ 2091\theta^4\! + 708696\theta^3 \!+ 646380\theta^2\! + 634230\theta + 1121931)\Z_K\\
\end{array}
$$
Applying the algorithm described in section \ref{secCRT}, we can compute without much effort  an element $\alpha\in K$ satisfying
$$
\begin{array}{lll}
\alpha\equiv 1(\operatorname{mod}\,{\p_2}),
\quad & \alpha\equiv \theta\left(\operatorname{mod}\,{\p_1}\right),\\\\
\alpha\equiv \theta^2\left(\operatorname{mod}\,({\p_1'})^2\right),
\quad &\alpha\equiv \theta^3\left(\operatorname{mod}\,({\p_1''})^3\right),
\quad &\alpha\equiv \theta^4\left(\operatorname{mod}\,({\p_1'''})^4\right).
\end{array}
$$
We may take, for instance:
$$
\alpha=3^{-9}(786086\theta^5 + 445989\theta^4 + 196857\theta^3 + 1159353\theta^2 + 649539\theta + 354294).
$$
Following the algorithm for $p$-adic valuations introduced in section \ref{secPadic}, we can check  this result  computing the valuations of the differences $r-\theta^j$ at the prime ideals dividing 3:
$$
v_{\p_2}(\alpha-1)=1,\
v_{\p_1}(\alpha-\theta)=7,\
v_{\p_1'}(\alpha-\theta^2)=4,\
v_{\p_1''}(\alpha-\theta^3)=4,\
v_{\p_1'''}(\alpha-\theta^4)=4.
$$

All these computations are almost immediate. Even the computation of an $S$-integral basis takes only 0.06 seconds.

\subsection{Medium degree}
Consider the polynomials:
$$\as{1.2}
\begin{array}{ll}
\phi_0= x+1,&\phi_1=\phi_0^{2}+2, \\
\phi_{21}=\phi_1^{2}+8, &\phi_{22}= \phi_1^{4}+4\phi_0\phi_1^{2}+32, \\
\phi_3=\phi_{22}^{2}+256\phi_1^{2},&
f=\phi_3\phi_{21}+2^{30}.
\end{array}
$$
Let $K=\Q(\theta)$ be the number field of degree 20 determined by a root $\theta\in\overline{\Q}$ of $f$. For the prime $p=2$,
the polynomial $f$ has two complete types, with associated Okutsu frames  $[\phi_1,\phi_{21}]$ and $[\phi_1,\phi_{22},\phi_3]$, which give rise to the two prime  ideals  of $\Z_K$ over $2$.
The concrete decomposition is $2\Z_K=\p_1^4\p_2^8$, where $f(\p_i/2)=i$, $e(\p_i/2)=4i$.

The discriminant of $f$ is
$$
\begin{array}{rl}
\operatorname{Disc}(f)=&2^{268}\cdot 3^2\cdot 19927\cdot 43691^2\cdot 211039\cdot 6059454913\cdot\\
&512920919154157817\cdot
 25506978885046388417449\cdot\\
 & 149169795543042282387542317948232968678925571739.
\end{array}
$$

In this example we may compare the performance of the standard Magma functions and that of our package, since Magma can determine the ring of integers of $K$. Once the factorization of $\operatorname{Disc}(f)$ is known, Magma takes 5.8 seconds to determine $\Z_K$, and 0.08 seconds to find the decomposition of the prime 2 in $\Z_K$.
Our package takes 0.3 seconds to see that $\operatorname{Disc}(K)=2^{-234}\operatorname{Disc}(f)$, and during this computation already finds the decomposition of all the primes dividing the discriminant.  Our program can also compute a  2-integral basis of $K$, which is already a global integral basis,  in 0.02 seconds.

\section{Conclusions}\label{secConclusion}
\subsection{Challenges}\label{first}
We described routines to perform the basic tasks concerning fractional ideals of a number field, based on the Okutsu-Montes representations of the prime ideals \cite{montes}, \cite{HN}. This avoids the factorization of the discriminant of a defining equation and the construction of the maximal order. These routines are very fast in practice, as long as one deals with fractional ideals whose norm may be factorized.

A big challenge arises: is it possible to combine these techniques with some kind of LLL reduction to test if a fractional ideal is principal?

Also, the generators of the prime ideals constructed in this paper have small height as vectors in $\Q^n$ (the coefficients of its standard representation as a polynomial in $\t$). This may have some advantages, but in many applications it is preferable to have generators of small norm. A solution to the above mentioned challenge would
probably lead to a procedure to find generators of small norm too.


\subsection{Comparison with the standard methods}Suppose the discriminant of the defining equation of the number field may be factorized. Most of the methods to compute a $\Z$-basis of the maximal order are based on variants of the Round 2 and Round 4 algorithms of Zassenhaus.
The procedure of section \ref{secBasis} yields a much faster computation of an integral basis and the discriminant of the field.

Once the maximal order is constructed, we can compare our routines for the manipulation of fractional ideals with the standard ones. The routines based on the Okutsu-Montes representations of the prime ideals are faster, mainly because they avoid the usual linear algebra techniques (computation of bases of the ideals, Hermite and Smith normal forms, etc.), which become slow if the degree of the number field grows.

\subsection{Curves over finite fields}The results of these paper are easily extendable to function fields. If $C$ is a curve over a finite field, there is a natural identification of rational prime divisors of $C$ with prime ideals of the integral closures of certain subrings of the function field \cite{hess}, \cite{hess2}. Montes algorithm may be applied as well to construct these prime ideals, and the routines of this paper lead to parallel routines to find the divisor of a function, or to construct a function with zeros and poles of a prescribed order, at a finite number of places.

The results of section \ref{secBasis} may be used to efficiently compute bases of the above mentioned integral closures too. However, the big challenge of section \ref{first} has its parallel in the geometric situation: we hope that the techniques of this paper may be used to find better routines to compute bases of the Riemann-Roch spaces and to deal with reduced divisors. This would open the door to operate in the group $\op{Pic}^0(C)$ of rational points of the Jacobian of $C$, for curves with plane models of very large degree.

\end{document}